\newtheorem{theorem}{Theorem}[section]
\newtheorem{lemma}[theorem]{Lemma}
\newtheorem{corollary}[theorem]{Corollary}
\theoremstyle{definition}
\theoremstyle{remark}
\newtheorem{remark}[theorem]{Remark}
\numberwithin{equation}{theorem}
\newcommand{\GL}{{\mathrm {GL}}}
\newcommand{\PGL}{{\mathrm {PGL}}}
\newcommand{\PSL}{{\mathrm {PSL}}}
\newcommand{\Sp}{{\mathrm {Sp}}}
\newcommand{\Aut}{{\mathrm {Aut}}}
\newcommand{\Out}{{\mathrm {Out}}}
\newcommand{\Irr}{{\mathrm {Irr}}}
\renewcommand{\Im}{{\mathrm {Im}}}
\newcommand{\St}{{\mathrm {St}}}
\newcommand{\Stab}{{\mathrm {Stab}}}
\newcommand{\FF}{{\mathbb F}}
\newcommand{\ta}{\hspace{0.5mm}^{2}\hspace*{-0.2mm}}
\newcommand{\GF}{\mbox{GF}}
\newcommand{\bI}{{\mathbf{I}}}
\newcommand{\bC}{{\mathbf{C}}}
\newcommand{\bF}{{\mathbf{F}}}
\newcommand{\bO}{{\mathbf{O}}}
\newcommand{\bN}{{\mathbf{N}}}
\newcommand{\bZ}{{\mathbf{Z}}}
\newcommand{\Al}{\textup{\textsf{A}}}
\newcommand{\Sy}{\textup{\textsf{S}}}
\begin{document}

\title[Abelian subgroups and the largest character degree]
{Abelian subgroups, nilpotent subgroups, and the largest character
degree of a finite group}

\author{Nguyen Ngoc Hung}
\address{Department of Mathematics, The University of Akron, Akron, OH 44325,
USA} \email{hungnguyen@uakron.edu}

\author{Yong Yang}
\address{Department of Mathematics, Texas State University, San Marcos, TX 78666,
USA} \email{yang@txstate.edu}

\subjclass[2010]{Primary 20C15, 20D15, 20D10}

\keywords{Finite groups, abelian subgroups, nilpotent subgroups,
character degrees, largest character degree, Gluck's conjecture}

\date{\today}

\maketitle

\begin{abstract} Let $H$ be an abelian subgroup of a finite group $G$ and
$\pi$ the set of prime divisors of $|H|$. We prove that $|H
\bO_{\pi}(G)/ \bO_{\pi}(G)|$ is bounded above by the largest
character degree of $G$. A similar result is obtained when $H$ is
nilpotent.
\end{abstract}

\section{Introduction}

Gluck's conjecture \cite{Gluck} asserts that $|G:\bF(G)|\leq b(G)^2$
for every finite solvable group $G$, where $b(G)$ denotes the
largest irreducible character degree of $G$ and $\bF(G)$ denotes the
Fitting subgroup of $G$. Although still open, it has been confirmed
for various cases
\cite{Dolfi-Jabara,Yang,Cossey-Halasi-Maroti-Nguyen} and
furthermore, it was proved by Moreto and Wolf \cite{Moreto-Wolf}
that $|G:\bF(G)|\leq b(G)^3$ for every solvable group $G$. In
\cite{Cossey-Halasi-Maroti-Nguyen}, Cossey et al. provided
considerable evidence showing that the inequality $|G:\bF(G)|\leq
b(G)^3$ might be true for \emph{every} finite group $G$. In
particular, we have $|G:\bF(G)|\leq b(G)^4$ for every $G$, which
then implies that $G$ contains an abelian subgroup of index at most
$b(G)^{8}$, see \cite[Theorems 4 and
8]{Cossey-Halasi-Maroti-Nguyen}.

Can we bound any portion of an abelian subgroup in terms of the
largest character degree $b(G)$?

Our first result is the following. Here we use $\pi(n)$ to denote
the set of prime divisors of a positive integer $n$ and
$\bO_{\pi}(G)$ to denote the largest normal subgroup of $G$ whose
order is divisible by only primes in $\pi$.

\begin{theorem}\label{thm2}
Let $H$ be an abelian subgroup of a finite group $G$ and let
$\pi:=\pi(|H|)$. Then $|H \bO_{\pi}(G)/ \bO_{\pi}(G)| \leq b(G)$.
\end{theorem}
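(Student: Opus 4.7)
The plan is to first reduce to the case $\bO_\pi(G) = 1$, then prove $|H| \leq b(G)$ under that assumption via a coprime-orbit argument on $\Irr(\bF(G))$ combined with Clifford theory. Set $N := \bO_\pi(G)$, $\bar G := G/N$, $\bar H := HN/N$ and $\bar\pi := \pi(|\bar H|)$. Since $|\bar H|$ divides $|H|$ we have $\bar\pi \subseteq \pi$; and any normal $\bar\pi$-subgroup of $\bar G$ lifts to a normal $\pi$-subgroup of $G$ containing $N$, so must equal $N$. Hence $\bO_{\bar\pi}(\bar G) = 1$, and granting the reduced statement gives $|H\bO_\pi(G)/\bO_\pi(G)| = |\bar H| \leq b(\bar G) \leq b(G)$.

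\medskip

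Now assume $\bO_\pi(G) = 1$ and aim for $|H| \leq b(G)$. Then $\bF(G) = \prod_{p \notin \pi}\bO_p(G)$ is a $\pi'$-group, so $H \cap \bF(G) = 1$ and $H$ acts coprimely on $\bF(G)$. For each prime $p$ dividing $|\bF(G)|$, the abelian quotient $H/\bC_H(\bO_p(G))$ acts faithfully and coprimely on the $p$-group $\bO_p(G)$. I would invoke a standard regular-orbit theorem for this situation to obtain $\lambda_p \in \Irr(\bO_p(G))$ with $\Stab_H(\lambda_p) = \bC_H(\bO_p(G))$, and then take $\lambda := \prod_p \lambda_p \in \Irr(\bF(G))$, for which $\Stab_H(\lambda) = \bC_H(\bF(G))$. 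By Clifford theory there is then $\chi \in \Irr(G \mid \lambda)$ with
\[
\chi(1) \;\geq\; [G:T_G(\lambda)] \;\geq\; [H : H \cap T_G(\lambda)] \;=\; |H|/|\bC_H(\bF(G))|,
\]
so that $b(G)\cdot|\bC_H(\bF(G))| \geq |H|$.

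\medskip

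When $G$ is solvable, the classical containment $\bC_G(\bF(G)) \leq \bF(G)$ forces $\bC_H(\bF(G)) \leq H \cap \bF(G) = 1$, and the desired inequality $|H| \leq b(G)$ follows at once. The main obstacle is the nonsolvable case: $\bC_G(\bF(G))$ can be much larger than $\bF(G)$, so $\bC_H(\bF(G))$ need not be trivial. To overcome this I would replace $\bF(G)$ by the generalized Fitting subgroup $\bF^*(G) = \bF(G)\, E(G)$ and try to extend $\lambda$ to a character of $\bF^*(G)$ whose $H$-stabilizer is contained in $H \cap \bF^*(G)$, exploiting the containment $\bC_G(\bF^*(G)) \leq \bF^*(G)$. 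Carrying this out requires analyzing the $H$-action on the quasisimple components of the layer $E(G)$ and controlling character degrees and abelian subgroups of those components; this step is likely to lean on the classification of finite simple groups and constitutes the technical heart of the argument.
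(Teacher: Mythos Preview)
Your reduction to $\bO_\pi(G)=1$ is correct and matches the paper. In the solvable case your argument is essentially right, but the paper inserts one extra reduction you omit: passing to $\Phi(G)=1$. This makes $\bF(G)$ \emph{abelian} (and $G=\bF\rtimes A$), so the ``regular-orbit theorem'' you invoke becomes the classical statement that an abelian group acting faithfully and coprimely on an abelian group has a regular orbit on its dual. Without $\Phi(G)=1$, your appeal to a regular orbit of $H/\bC_H(\bO_p(G))$ on $\Irr(\bO_p(G))$ for a possibly nonabelian $\bO_p(G)$ is not a standard black box. The paper also reduces (by induction) to $G=H\bF$ before applying the orbit argument; this is harmless but streamlines the Clifford step.

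The nonsolvable case is where your proposal remains only a plan, and the paper's route differs from the one you sketch. You propose to work with $\bF^*(G)$ and extend the character $\lambda$ from $\bF(G)$ to $\bF^*(G)$ while keeping the $H$-stabilizer inside $H\cap\bF^*(G)$. The paper does \emph{not} try to control stabilizers on the layer in this way. Instead it fixes a single nonabelian minimal normal subgroup $V=V_1\times\cdots\times V_k$, reduces by induction to $G=H(V\times\bC_G(V))$, applies the inductive hypothesis to $\bC_G(V)$ to get $\psi\in\Irr(\bC_G(V))$ with $\psi(1)\ge |H\cap\bC_G(V)|$, and then bounds $|H\bC_G(V)/\bC_G(V)|$ by $b(V_1)^k$ via two ingredients proved separately: (i) an almost-simple bound $|A|\le b(S)$ for every abelian $A\le\Aut(S)$ (with the single exception $S\cong\Al_5$), and (ii) a wreath-product lemma showing that an abelian subgroup of $\Aut(V_1)\wr\Sy_k$ has order at most $(|\Aut(V_1)|_{\mathrm{abelian}})^k$. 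The product $\chi\times\psi\in\Irr(V\times\bC_G(V))$ then gives $b(G)\ge|H|$ directly, with a separate argument (using a regular orbit on the power set and two degree-$3$ characters of $\Al_5$) to handle the exceptional case $V_1\cong\Al_5$. Your stabilizer-extension idea could conceivably be made to work, but it would still need exactly the almost-simple input (i), since controlling the $H$-stabilizer on $\Irr(E(G))$ ultimately amounts to comparing abelian-subgroup orders in $\Aut(S)$ to $b(S)$; the paper's approach is more direct because it bypasses stabilizer bookkeeping in favor of a pure size estimate.
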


This result does not hold if ``abelian'' is replaced by
``nilpotent''. For example, consider a group $G$ of order $72$ which
is the semidirect product of the dihedral group of order $8$ acting
nontrivially on the elementary abelian group of order $9$. The
largest character degree of $G$ is $4$ while $G$ has a nilpotent
subgroup of order $8$ and $\bO_2(G)=1$. This example also shows that
the bound we obtain in Theorem~\ref{thm1} is tight.

Let $P$ be a Sylow $p$-subgroup of a nonabelian group $G$. It was
proved by Qian and Shi \cite[Theorem 1.1]{QIANSHI} that
$|P/\bO_p(G)|<b(G)^2$. Lewis ~\cite{MLewis} showed that for
$p$-solvable groups $G$, one can strengthen the last result to
$|P/\bO_p(G)| \leq (b(G)^p/p)^{\frac {1} {p-1}}$ and then asked
whether the same statement holds for arbitrary groups. This was
answered affirmatively by Qian and the second author in
\cite{QianYang1}.

In this paper, we strengthen all the previous results from a Sylow
$p$-subgroup to a nilpotent subgroup.

\begin{theorem} \label{thm1}
Let $H$ be a nilpotent subgroup of a nonabelian finite group $G$.
Let $\pi:=\pi(|H|)$ and $p$ the smallest prime in $\pi$. Then $|H
\bO_{\pi}(G)/ \bO_{\pi}(G)| \leq (b(G)^p/p)^{\frac {1} {p-1}}$.
\end{theorem}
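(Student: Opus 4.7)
\noindent\emph{Proof plan.}
The proof naturally reduces to the case $\bO_\pi(G)=1$, after which the Qian--Yang theorem on Sylow $p$-subgroups \cite{QianYang1} handles the $p$-part of $H$; the main work lies in absorbing the Hall $p'$-part of $H$ into that bound.

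For the reduction, pass to $\bar G := G/\bO_\pi(G)$ and $\bar H := H\bO_\pi(G)/\bO_\pi(G)$. Then $\bO_\pi(\bar G)=1$, $b(\bar G)\le b(G)$, and the smallest prime $\bar p$ dividing $|\bar H|$ satisfies $\bar p\ge p$ (since $\pi(|\bar H|)\subseteq \pi$). A short calculation shows that for each fixed $b\ge 2$ the function $q\mapsto (b^q/q)^{1/(q-1)}$ is non-increasing in the prime $q$, so the theorem applied to $\bar G$ (with prime $\bar p$) yields the theorem for $G$ (with prime $p$), by induction on $|G|$. Henceforth assume $\bO_\pi(G)=1$; then $\bO_p(G)=1$ as well, so Qian--Yang gives $|P|=|P/\bO_p(G)|\le (b(G)^p/p)^{1/(p-1)}$ for every Sylow $p$-subgroup $P$ of $G$. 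This settles the theorem whenever $H$ is itself a $p$-group.

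For the general nilpotent case, write $H = H_p\times K$, where $K$ is the Hall $p'$-part of $H$, a nilpotent group involving only primes strictly greater than $p$. By nilpotency, $K\le \Centralizer_G(H_p)$. The task is to prove $|H|=|H_p|\cdot|K|\le (b(G)^p/p)^{1/(p-1)}$, i.e.\ to show that $|K|$ is absorbed by whatever slack exists in the Qian--Yang bound for $|H_p|$. A natural attempt is induction on $|\pi|$: applying the theorem to $K$ gives $|K|\le (b(G)^q/q)^{1/(q-1)}$ for the second-smallest prime $q>p$ in $\pi$, a strictly smaller quantity than the target; but multiplying the two bounds is of course insufficient, so one must instead work inside $\Centralizer_G(H_p)$ (or an appropriate section) and extract a refinement of Qian--Yang that already knows about the commuting factor $K$.

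The main obstacle is quantitative. A naive combination of Qian--Yang for $|H_p|$ with Theorem~\ref{thm2} applied to the abelian subgroup $\bZ(K)$ yields only
\[
|H|\ \le\ b(G)\cdot (b(G)^p/p)^{1/(p-1)},
\]
which is weaker than the target by a factor of $b(G)$. Removing this spurious $b(G)$ is the crux of the proof and, I expect, will require either (i) a CFSG-based analysis of a minimal counterexample, showing that almost simple composition factors of $G$ admit only tightly controlled nilpotent $\pi$-subgroups with smallest prime $p$, or (ii) an orbit/character argument in the spirit of \cite{QianYang1} carried out for the full nilpotent group $H$ rather than just $H_p$, producing a single irreducible character of $G$ whose degree simultaneously dominates $|H_p|$ and $|K|$.
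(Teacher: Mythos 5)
Your proposal is not a complete proof: after the reduction to $\bO_\pi(G)=1$ and the observation that the case where $H$ is a $p$-group follows from Qian--Yang, you explicitly leave the general nilpotent case open, offering only two speculative strategies for ``removing the spurious factor $b(G)$.'' That missing step is precisely the content of the theorem, so the crux of the argument is absent. For the record, the paper does not decompose $H$ as $H_p\times K$ at all. After reducing to $\bO_\pi(G)=\Phi(G)=1$, it splits into two cases according to the minimal normal subgroups of $G$. If all of them are solvable, then $\bF(G)$ is an abelian $\pi'$-group with $\bC_G(\bF(G))=\bF(G)$, one reduces by induction to $G=H\bF(G)$, and Isaacs' regular-orbit theorem for coprime actions of nilpotent groups (Theorem B of \cite{IMI2}) produces a linear $\lambda\in\Irr(\bF(G))$ with $|\bC_H(\lambda)|\le(|H|/p)^{1/p}$, hence an irreducible character of $G$ of degree at least $p^{1/p}|H|^{(p-1)/p}$ --- this is the ``orbit argument for the full nilpotent group'' that you only conjectured. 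If $G$ has a nonsolvable minimal normal subgroup $V=V_1\times\cdots\times V_k$, the paper combines three ingredients you do not have: a CFSG-dependent bound $p^{1/(p-1)}|N|\le b(S)^{p/(p-1)}$ for nilpotent subgroups $N$ of $\Aut(S)$, $S$ simple (Theorem~\ref{theorem-nilpotent-simple}, occupying all of Section 2); a bound $|N|\le(p^{1/(p-1)})^{n-1}$ for nilpotent permutation groups of degree $n$ (Lemma~\ref{lem1}), needed to control the wreath-product structure of $\Out(V)$; and induction applied to $\bC_G(V)$, all assembled via the character $\chi\times\psi$ of $V\times\bC_G(V)$, with the exceptional pair $(\PSL_2(8),p=3)$ handled separately using Gluck's regular-orbit theorem.

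Two further points. First, your reduction to $\bO_\pi(G)=1$ rests on the claim that $q\mapsto(b^q/q)^{1/(q-1)}$ is non-increasing in $q$ for fixed $b\ge 2$; this is false in general (the function tends to $b$ from below as $q\to\infty$, e.g.\ for $b=2$ one has $(2^5/5)^{1/4}<(2^7/7)^{1/6}$), so if passing to $G/\bO_\pi(G)$ kills the $p$-part of $H$ and raises the smallest prime, your inductive step needs an additional argument relating $b(G)$ to the primes involved. Second, your ``naive combination'' of Qian--Yang with Theorem~\ref{thm2} applied to $\bZ(K)$ is not obviously even legitimate as stated, since the two bounds come from two different irreducible characters and there is no a priori reason a single character degree dominates the product; in any case you correctly identify that it is too weak. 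The proposal should therefore be regarded as a plausible opening reduction plus a research plan, not a proof.
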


The paper is organized as follows. In the next section we bound the
size of nilpotent subgroups of almost simple groups $\Aut(S)$ in
terms of the largest character degree $b(S)$. This is used in
Section~\ref{section-nilpotent} to prove Theorem~\ref{thm1}. Proof
of Theorem~\ref{thm2} is given in Section~\ref{section-abelian}.


\section{Nilpotent subgroups of almost simple groups}

In this section we essentially prove Theorem~\ref{thm1} for almost
simple groups. We start with a situation where the bound is slightly
better.

\begin{lemma}\label{lemma-nilsub-crosscharacteristic}
Let $S$ be a simple group of Lie type defined over a field of
$q=\ell^f$ elements with $\ell$ a prime number. Assume that $S\ncong
\PSL_2(q)$. Let $H$ be a nilpotent subgroup of $\Aut(S)$ and assume
that $H\cap (S\cdot D)$ is not an $\ell$-subgroup of $S\cdot D$
where $D$ is the group of diagonal automorphisms of $S$. Then
$|H|\leq b(S)$.
\end{lemma}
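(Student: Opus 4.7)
The plan is to split $H$ by its nilpotent structure into $\ell$- and $\ell'$-parts, place each inside a tractable subgroup of $\Aut(S)$, and compare their combined order against a character degree of $S$ coming from Deligne--Lusztig theory.

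Since $H$ is nilpotent we write $H = H_\ell \times H_{\ell'}$. The hypothesis that $H_0 := H \cap (S \cdot D)$ is not an $\ell$-subgroup forces $H_{\ell'} \cap (S \cdot D) \neq 1$. Viewing $S \cdot D$ as the group of fixed points $\mathbf{G}^F$ of a connected reductive algebraic group $\mathbf{G}$ over $\overline{\FF}_\ell$ under a Steinberg endomorphism $F$, every $\ell'$-element of $\mathbf{G}^F$ is semisimple, so the center $Z(H_{\ell'} \cap \mathbf{G}^F)$ sits in some $F$-stable maximal torus $\mathbf{T}^F$. Structural arguments drawn from the theory of maximal subgroups of finite reductive groups (the Sylow subgroups of $H_{\ell'}$ commute, and each normalizes the common abelian center) then show that $|H_{\ell'} \cap \mathbf{G}^F|$ is controlled by $|\mathbf{T}^F|$ together with a subgroup of the relative Weyl group $W_F(\mathbf{T})$, i.e.\ essentially by $|\Normalizer_{\mathbf{G}^F}(\mathbf{T}^F)|$. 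Outside $S \cdot D$, the $\ell'$-contribution to $H$ is bounded by the $\ell'$-part of $|\Aut(S):S\cdot D|$, hence by $fd$ with $d \leq 3$ the order of the graph automorphism group.

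On the other side, $H_\ell \cap (S \cdot D)$ is an $\ell$-subgroup of $\mathbf{G}^F$, so its order is at most $|S|_\ell$, and the outer $\ell$-part is bounded by $f_\ell$, the $\ell$-part of $f$. The final step is to dominate $|H_\ell| \cdot |H_{\ell'}|$ by a single irreducible character degree of $S$. Deligne--Lusztig theory furnishes irreducible characters of $\mathbf{G}^F$ of degree $[\mathbf{G}^{*F} : C_{\mathbf{G}^{*F}}(s^*)]_{\ell'}$ attached to semisimple $s^* \in \mathbf{G}^{*F}$, and tensoring by the unipotent Steinberg of the centralizer produces characters of degree $|\mathbf{G}^F|_\ell \cdot [\mathbf{G}^{*F} : C_{\mathbf{G}^{*F}}(s^*)]_{\ell'}$. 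Choosing $s^*$ regular so that its centralizer is a torus $\mathbf{T}^{*F}$ dual to $\mathbf{T}$, and using $|S|_\ell = |\mathbf{G}^F|_\ell$ together with $|\mathbf{T}^F| = |\mathbf{T}^{*F}|$, should bring the resulting character degree within range of dominating $|H|$ after the outer factors $f$ and $d$ are absorbed.

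The main obstacle will be carrying out this numerical comparison uniformly across Lie types, especially for small-rank classical groups where the gap between $|\Normalizer_{\mathbf{G}^F}(\mathbf{T}^F)| \cdot |S|_\ell$ and $b(S)$ is tight, and for the exceptional groups of small order where explicit character-table data (or estimates from \cite{Cossey-Halasi-Maroti-Nguyen} and related work) may be needed. The explicit exclusion of $\PSL_2(q)$ from the hypothesis is instructive: there $b(\PSL_2(q)) \in \{q-1, q, q+1\}$ while cyclic maximal tori have comparable order, leaving no slack in the inequality; by contrast, higher rank groups provide extra room via larger Weyl groups and a richer collection of torus orders.
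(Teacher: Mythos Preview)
Your plan has a genuine gap, and it stems from decoupling $H_\ell$ and $H_{\ell'}$ and bounding each separately. Once you write $|H_\ell \cap (S\cdot D)| \leq |S|_\ell$, you have thrown away the only real leverage the hypothesis gives you: nilpotency of $H$ forces $H_\ell$ to \emph{centralize} $H_{\ell'}$, so $H_\ell \cap (S\cdot D)$ must lie in $\bC_{\mathbf{G}^F}(s)$ for a nontrivial semisimple $s$. The $\ell$-part of such a centralizer is much smaller than $|S|_\ell$ (for $\PGL_n(q)$ it drops from $q^{n(n-1)/2}$ to at most $q^{(n-1)(n-2)/2}$). The paper exploits exactly this: it picks $s$ central in the nilpotent group $H\cap G$, obtains $H\cap G \subseteq \bC_G(s)$ outright, bounds the order of a nilpotent subgroup of the centralizer using its known product structure, and then compares the result (times the abelian outer factor $|A(S)|$) against the Steinberg degree $|S|_\ell$. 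No Deligne--Lusztig input beyond $\St_S(1)$ is used.

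Your proposed substitute, a Deligne--Lusztig character of degree $|\mathbf{G}^F|_\ell \cdot [\mathbf{G}^{*F}:\mathbf{T}^{*F}]_{\ell'}$, does not exist. The regular character attached to $s^*$ has degree $[\mathbf{G}^{*F}:C_{\mathbf{G}^{*F}}(s^*)]_{\ell'}\cdot |C_{\mathbf{G}^{*F}}(s^*)|_\ell$; when $s^*$ is regular with toral centralizer, $|{\mathbf{T}^*}^F|_\ell=1$ and you recover only the semisimple degree $|\mathbf{G}^F|_{\ell'}/|\mathbf{T}^F|$, not $|\mathbf{G}^F|/|\mathbf{T}^F|$. (Already for $\GL_2(q)$ the number $q(q+1)$ is not a character degree.) Consequently the comparison you set up, roughly $|S|_\ell \cdot |\Normalizer_{\mathbf{G}^F}(\mathbf{T}^F)| \cdot fd \leq b(S)$, fails in general: the right-hand side is typically of order $|S|_\ell$ itself, leaving no room for the torus-normalizer factor. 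The fix is not a sharper character degree but the observation above that $H_\ell$ is already constrained to the centralizer of $s$.
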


\begin{proof}
We assume that $S\ncong \Sp_4(2)'$, $\ta F_4(2)'$ as these cases can
be checked directly using \cite{Atlas}. We then can find a simple
algebraic group $\mathcal{G}$ of adjoint type and a Frobenius
morphism $F:\mathcal{G}\rightarrow \mathcal{G}$ such that
$G:=\mathcal{G}^F=S\cdot D$. The automomorphism group $\Aut(S)$ is
now a split extension of $G$ by an abelian group, denoted by $A(S)$,
of field and graph automorphisms.

From the hypothesis that $H$ is a nilpotent subgroup of $\Aut(S)$
and $H\cap G$ is not an $\ell$-subgroup of $G$, we deduce that
$H\cap G$ contains an element of order coprime to $\ell$, which
means that $H\cap G$ contains a nontrivial \emph{semisimple
element}, say $s$. Indeed, we can choose $s$ to be central in $H\cap
G$ since $H\cap G$ is nilpotent. We have
\[H\cap G \subseteq \bC_{G}(s).\]

The structure of the centralizers of semisimple elements in simple
groups of Lie type is essentially known, see
\cite{Carter,Fong-Srinivasan,Nguyen,Tiep-Zalesskii} for classical
groups and \cite{Deriziotis,Deriziotis-Liebeck,Deriziotis2,Lubeck}
for exceptional groups. Roughly speaking, the centralizer of a
semisimple element in a finite group of Lie type is ``close'' to a
direct product of finite groups of Lie (possibly other) type of
lower rank. We then bound the size of the nilpotent subgroup $H\cap
G$ of $\bC_{G}(s)$ and use the obvious inequality $|H|\leq |H\cap
G||A(S)|$ to bound $|H|$.

As the arguments for different types of groups are similar, we will
prove only the case $S=\PSL_n(q)$ with $n\geq 3$ as a demonstration.
Note that in this case $G=\PGL_n(q)$. We assume that $S$ is not one
of the small groups $\PSL_n(2)$ with $3\leq n\leq 6$, $\PSL_4(q)$
with $2\leq q\leq 5$, $\PSL_3(q)$ with $3\leq q\leq 11$,
$\PSL_3(16)$, $\PSL_5(3)$, and $\PSL_6(3)$. Indeed, these small
cases can be argued by similar arguments, so we skip the details.

For simplicity, we also denote a preimage of $s$ in $\GL_n(q)$ by
$s$. If the characteristic polynomial of $s$ is a product
$\prod_{i=1}^tf_i^{a_i}(x)$, where each $f_i$ is
 a distinct monic irreducible polynomial over $\mathbb{F}_q$ of degree
$k_i$, then it is well-known that
$${ \bC}_{\GL_n(q)}(s)\cong \GL_{a_1}(q^{k_1})\times \GL_{a_2}(q^{k_2})\times\cdots\times
\GL_{a_t}(q^{k_t}),$$ where $\sum_{i=1}^t a_ik_i=n$ and the number
of $\GL_a(q^k)$ appearing in the product is at most the number of
monic irreducible polynomials over $\mathbb{F}_q$ of degree $k$.

\medskip

1) First we consider the case $q$ is even. By \cite[Table
3]{Vdovin1}, the maximal size of a nilpotent subgroup of $\GL_{2}(q)$
is $q^{2}-1$ and of $\GL_{a}(q)$ with $a\geq 3$ is
$(q-1)q^{a(a-1)/2}$. Therefore the maximal size of a nilpotent
subgroup of $\GL_{a_i}(q^{k_i})$ is at most
$q^{k_i[\frac{a_i(a_i-1)}{2}+1]}$, which in turn implies that the
maximal size of a nilpotent subgroup of ${ \bC}_{\GL_n(q)}(s)$ is at
most
\[q^{\sum_{i=1}^t k_i[\frac{a_i(a_i-1)}{2}+1]}.\] Using induction on
$t$, one can show that \[\sum_{i=1}^t
k_i[\frac{a_i(a_i-1)}{2}+1]\leq \frac{(n-1)(n-2)}{2}+2\] unless
$t=1$ and $(a_1,k_1)=(n,1)$. Since $s\in \PGL_n(q)$ is nontrivial, a
preimage of $s$ in $\GL_n(q)$ is noncentral and so the case $t=1$
and $(a_1,k_1)=(n,1)$ does not happen. We conclude that the maximal
size of a nilpotent subgroup of ${ \bC}_{\GL_n(q)}(s)$ is at most
$q^{\frac{(n-1)(n-2)}{2}+2}$.

Now let $X$ be the preimage of the nilpotent group $H\cap
\PGL_n(q)\subseteq \bC_{\PGL_n(q)}(s)$ in $\GL_n(q)$ and consider
the map
$$\begin{array}{cccc}T:&X&\rightarrow &\bZ(\GL_n(q))\\
&x&\mapsto&z_x=x^{-1}s^{-1}xs\end{array}.$$  It is easy to see that
$T$ is a homomorphism and $z_x=1$ if and only if $x\in X\cap
\bC_{\GL_n(q)}(s)$. Moreover, as $\det(z_x)=1$ for every $x\in X$,
we have $|\Im(T)|\leq n$. We deduce that \[|X|\leq n|X\cap
\bC_{\GL_n(q)}(s)|\leq nq^{\frac{(n-1)(n-2)}{2}+2}.\] Therefore,
 \[|H\cap \PGL_n(q)|\leq \frac{n}{q-1} \cdot q^{\frac{(n-1)(n-2)}{2}+2},\]
 and hence
 \[|H|\leq 2f|H\cap \PGL_n(q)|\leq \frac{2fn}{q-1} \cdot q^{\frac{(n-1)(n-2)}{2}+2}.\]
 Now we use our assumptions on $n$ and $q$ to have
 \[|H|\leq q^{\frac{n(n-1)}{2}}.\] As the Steinberg character of
 $\PSL_n(q)$ has degree $q^{\frac{n(n-1)}{2}}$, we have $|H|\leq
 b(S)$, as wanted.

 \medskip

2) Next we consider the case $q$ is odd. Then the maximal size of a
nilpotent subgroup of $\GL_{2}(q)$ is at most $2(q^{2}-1)$ and of
$\GL_{a}(q)$ with $a\geq 3$ is still $(q-1)q^{a(a-1)/2}$. Arguing as
in the even case, we have that the maximal size of a nilpotent
subgroup of ${ \bC}_{\GL_n(q)}(s)$ is at most $2^{\lfloor
\frac{n}{2}\rfloor}q^{\frac{(n-1)(n-2)}{2}+2}$. As above, we then
have
 \[|H|\leq \frac{2^{\lfloor \frac{n}{2}\rfloor+1}fn}{q-1} \cdot q^{\frac{(n-1)(n-2)}{2}+2}.\]
 Now we use our assumptions on $n$ and $q$ to have
 $|H|\leq q^{\frac{n(n-1)}{2}}\leq b(S)$.
\end{proof}

\begin{theorem}\label{theorem-nilpotent-simple}
Let $S$ be a finite nonabelian simple groups and $H$ a nilpotent
subgroup of $\Aut(S)$. Let $p$ be the smallest prime divisor of
$|H|$. Then \[p^{\frac 1 {p-1}}|H| \leq b(S)^{\frac p {p-1}}\]
unless $S=\PSL_2(8)$ and $H$ is a Sylow $3$-subgroup of
$\Aut(\PSL_2(8))$.
\end{theorem}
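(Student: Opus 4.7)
The plan is to stratify the argument by the family of $S$ --- simple groups of Lie type (with $\PSL_2(q)$ singled out), alternating groups, and sporadic groups --- and to reduce to Lemma~\ref{lemma-nilsub-crosscharacteristic} whenever possible. The key observation is that if $|H|\leq b(S)$, which is exactly what Lemma~\ref{lemma-nilsub-crosscharacteristic} gives, then $p^{1/(p-1)}|H|\leq b(S)^{p/(p-1)}$ is equivalent to the very mild condition $p\leq b(S)$, which holds whenever $b(S)$ is at least the largest prime divisor of $|\Aut(S)|$.

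First, for $S$ of Lie type in characteristic $\ell$ with $S\ncong \PSL_2(q)$, I would apply Lemma~\ref{lemma-nilsub-crosscharacteristic} whenever $H\cap (S\cdot D)$ is not an $\ell$-subgroup; combined with the crude inequality $p\leq b(S)$ (which follows from $b(S)\geq q^N$ via the Steinberg character, where $N$ is the number of positive roots), this settles those cases. In the remaining subcase, $H\cap(S\cdot D)$ is an $\ell$-subgroup and embeds into a Sylow $\ell$-subgroup of $S\cdot D$ of order at most $q^N|D|$, while $H/(H\cap(S\cdot D))$ embeds into the abelian group of field and graph automorphisms. This yields a bound of the shape $|H|\leq q^N\cdot |A(S)|$ with $p=\ell$; comparing with $b(S)\geq q^N$ and noting that $|A(S)|$ is bounded essentially by $f\log_\ell q$, the desired inequality reduces to an elementary estimate.

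For $S=\PSL_2(q)$ I would enumerate the nilpotent subgroups of $\Aut(S)$ directly, using that up to conjugacy they lie in a Borel, the normalizer of a (split or non-split) torus, or arise by adjoining field automorphisms. The bound is then checked family by family using the standard values $b(\PSL_2(q))\in\{q-1,q,q+1\}$, and the check isolates precisely the single exception $S=\PSL_2(8)$, $|H|=27$, for which $3^{1/2}\cdot 27=27\sqrt{3}$ exceeds $b(S)^{3/2}=9^{3/2}=27$. For $S=\Al_n$ with $n\geq 5$ I would combine Vdovin's formula for the maximum order of a nilpotent subgroup of $\Sy_n$ with a standard lower bound on $b(\Al_n)$; small $n$ is a direct verification. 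The sporadic groups are handled by consulting the Atlas.

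The main obstacle I expect is the defining-characteristic subcase for Lie type groups, where Lemma~\ref{lemma-nilsub-crosscharacteristic} gives no information and $|H|$ must be controlled via Sylow $\ell$-subgroups of $\Aut(S)$; in particular the field-automorphism factor must be absorbed cleanly into the Steinberg estimate $b(S)\geq q^N$. A secondary obstacle is the bookkeeping for small or exceptional Lie type groups (such as $\Sp_4(2)'$, $\ta F_4(2)'$, and low-rank $\PSL_n(q)$) and for $\Al_n$ with small $n$, where both sides of the inequality are small and the bound must be verified by hand.
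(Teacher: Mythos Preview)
Your outline follows the paper's proof closely: the same stratification (alternating, sporadic, Lie type with $\PSL_2(q)$ handled separately), the same reduction via Lemma~\ref{lemma-nilsub-crosscharacteristic} to the defining-characteristic subcase, and the same comparison against the Steinberg bound $b(S)\geq q^N$. The treatment of $\PSL_2(q)$ in the paper is slightly more delicate than your sketch (it uses primitive prime divisors of $q\mp 1$ to handle the odd-order case), but the shape is the same.

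One point in your plan would fail as written. In the defining-characteristic subcase you assert ``with $p=\ell$'', but this need not hold: when $H\cap(S\cdot D)$ is an $\ell$-subgroup, $H$ can still pick up a Sylow $r$-subgroup with $r<\ell$ from field or graph automorphisms, so one only knows $p\leq\min\{\ell,x\}$, where $x=|H:H\cap(S\cdot D)|$ divides $|A(S)|$. The paper uses precisely this bound and then checks the elementary inequality $\min\{\ell,x\}\cdot x^{\min\{\ell,x\}-1}\leq q^N$. Your version, plugging in $p=\ell$, would instead require $\ell\cdot|A(S)|^{\ell-1}\leq q^N$, and this is false already for $S=\PSL_3(101)$ (where $|A(S)|=2$, $q^N=101^3$, but $101\cdot 2^{100}$ is astronomically larger). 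Once you replace ``$p=\ell$'' by ``$p\leq\min\{\ell,x\}$'' the estimate goes through exactly as in the paper. (Two smaller slips: the Sylow $\ell$-subgroup of $S\cdot D$ has order $q^N$, not $q^N|D|$, since $|D|$ is coprime to $\ell$; and ``$|A(S)|$ bounded by $f\log_\ell q$'' should read $2f$ or $6f$.)
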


\begin{proof}

1) {\textbf{Alternating groups.}} Let $S=\Al_n$ with $n\geq 7$. We
claim that $|H|\leq b(S)$ for every nilpotent subgroup $H$ of
$\Aut(S)$, and hence the theorem follows.

It is straightforward to check the statement for $7\leq n\leq 12$,
so we assume that $n\geq 13$. In particular $\Aut(\Al_n)=\Sy_n$. By
\cite[Theorem 2.1]{Vdovin1}, a nilpotent subgroup of $\Sy_n$ of
maximal order is conjugate to $Syl_2(\Sy_n)$ if $n\not\equiv 3 \bmod
4$ and to $Syl_2(\Sy_{n-3})\times \langle n-2,n-1,n\rangle$ if
otherwise. Therefore,
\[|H|\leq 2^{[n/2]+[n/2^2]+\cdots}\] if $n\not\equiv 3 \bmod
4$ and
\[|H|\leq 3\cdot 2^{[(n-3)/2]+[(n-3)/2^2]+\cdots}\] if $n\equiv 3 \bmod
4$. By induction on $n$, one can check that both
$2^{[n/2]+[n/2^2]+\cdots}$ and $3\cdot
2^{[(n-3)/2]+[(n-3)/2^2]+\cdots}$ are at most
$|\Al_n|^{1/3}=(n!/2)^{1/3}$ for $n\geq 13$. So $|H|\leq
|\Al_n|^{1/3}$ for $n\geq 13$. Using \cite[Theorem
12]{Cossey-Halasi-Maroti-Nguyen} (see \cite[Theorem
2.1]{Hung-Lewis-Schaeffer} also), we deduce that $|H|<b(\Al_n)$ for
$n\geq 13$ and the claim is proved.

\medskip

2) {\textbf{Classical groups other than $\PSL_2(q)$.}} Since the
treatments for different families of groups are similar, we present
here only the case $S=\PSL_n(q)$ for $n\geq 3$ and $q=\ell^f$.

By Lemma \ref{lemma-nilsub-crosscharacteristic}, we may assume that
$H\cap \PGL_n(q)$ is an $\ell$-subgroup of $\PGL_n(q)$. As the
automorphism group $\Aut(S)$ is well-known (see Theorem 2.5.12 of
\cite{Gorenstein-Lyons-Solomon} for instance), we know that
$\Aut(\PSL_n(q))$ is a split extension of $\PGL_n(q)$ by an abelian
group of order $2f$. Assume that
\[|H|=x|H\cap\PGL_n(q)|\leq xq^{n(n-1)/2}\] where $x\mid 2f$. If $x=1$ then $|H|\leq b(S)$ and we are done.
So we may assume that $x>1$. Since $p$ is the smallest prime divisor
of $|H|$, one has $p\leq \min\{\ell,x\}$. Note that $b(S)\geq
q^{n(n-1)/2}\geq \ell^{3f}$. Therefore, to prove the desired
inequality, it is enough to show
\[\min\{\ell,x\}\cdot x^{\min\{\ell,x\}-1}\leq \ell^{3f}.\]
If $\ell\geq x$ then  $\min\{\ell,x\}\cdot
x^{\min\{\ell,x\}-1}=x\cdot x^{x-1}=x^x\leq\ell^x$. As $x\leq 2f$,
it follows that $\min\{\ell,x\}\cdot x^{\min\{\ell,x\}-1}\leq
\ell^{2f}<\ell^{3f}$, as wanted. So we can assume that $\ell<x$.
Then $\min\{\ell,x\}\cdot x^{\min\{\ell,x\}-1}=\ell\cdot x^{\ell-1}$
and hence we need to show that
\[\ell\cdot x^{\ell-1}\leq \ell^{3f},\] which is equivalent to $x^{\ell-1}\leq
\ell^{3f}$. Since $x\leq 2f$, it is enough to show that
$(2f)^\ell\leq \ell^{3f}$. This inequality is elementary with the
note that $2\leq\ell<2f$.

\medskip

3) {\textbf{Exceptional groups of Lie type.}} The arguments for
exceptional groups are indeed similar to those for classical groups,
so we consider only the case $S=\ta E_6(q)$ as an example.

By Lemma \ref{lemma-nilsub-crosscharacteristic}, we may assume that
$H\cap (\ta E_6)_{ad}(q)$ is an $\ell$-subgroup of order $q^{36}$ of
$(\ta E_6)_{ad}(q)$. The automorphism group $\Aut(S)$ is a split
extension of $(\ta E_6)_{ad}(q)$ by the cylic group of order $f$.
Assume that
\[|H|=x|H\cap(\ta E_6)_{ad}(q)|\leq xq^{36}\] where $x\mid f$. If $x=1$ then $|H|\leq b(S)$ and we are done.
So we may assume that $x>1$. Since $p$ is the smallest prime divisor
of $|H|$, one has $p\leq \min\{\ell,x\}$. Note that $b(S)\geq
q^{36}= \ell^{36f}$. Therefore, to prove the desired inequality, it
is enough to show
\[\min\{\ell,x\}\cdot x^{\min\{\ell,x\}-1}\leq \ell^{36f},\]
but this can be argued as above.

\medskip

4) {\textbf{Sporadic groups.}} Since $H\cap S$ is a nilpotent group,
it contains a central element $z$ of order $p_1p_2\cdots p_k$ where
$p_1, p_2,...,p_k$ are all distinct primes dividing $|H\cap S|$. Now
we have \[|H\cap S|\leq |\mathbf{C}_S(z)|_{p_1,p_2,...,p_k}.\]
Checking the orders of centralizers of these elements in
\cite{Atlas}, we see that $|\mathbf{C}_S(z)|_{p_1,p_2,...,p_k}$ is
maximal when $k=1$, which means that $|H\cap S|$ is maximal when it
is a Sylow subgroup of $S$.

We also can check from \cite{Atlas} that the order of a Sylow
subgroup of $S$ is at most $b(S)/2$. Therefore \[|H\cap S|\leq
b(S)/2.\] Note that $|\Aut(S):S|\leq 2$. So $|H:(H\cap S)|\leq 2$.
We deduce that $|H|\leq 2|H\cap S|\leq b(S)$, which implies the
desired inequality.

\medskip

5) {\textbf{$S=\PSL_2(q)$ for $q=\ell^f\geq 5$.}} As the small cases
$S=\PSL_2(5\leq q\leq 9)$ can be checked using \cite{GAP}, we assume
that $q\geq 11$. Note that $b(S)=q+1$. As the case where $H\cap
\PGL_2(q)$ is a subgroup of an unipotent subgroup of $\PGL_2(q)$ can
be handled as before, we assume that $H\cap \PGL_2(q)$ contains a
nontrivial \emph{central} semisimple element, say $s$. For
simplicity, we also denote a preimage of $s$ in $\GL_2(q)$ by $s$.

First assume that $|H|$ is even or equivalently $p=2$. Note that the
nilpotent subgroup $H\cap \PGL_2(q)$ of $\PGL_2(q)$ has order at
most $2(q+1)$ (see \cite{Vdovin1} for instance). We deduce that
$|H|\leq 2f(q+1)$ and hence
\[p^{\frac 1 {p-1}}|H|=2|H| \leq 4f(q+1).\]
As $q\geq 11$, we have $4f\leq \ell^f+1=q+1$ and therefore
\[p^{\frac 1 {p-1}}|H|\leq (q+1)^2=b(S)^p,\] as wanted.

From now on we assume that $|H|$ is odd. Then $H\cap
\PGL_2(q)\subseteq \bC_{\PGL_2(q)}(s)$ is an odd-order nilpotent
subgroup of $\PGL_2(q)$. Note that
\[\bC_{\GL_2(q)}(s)\cong\GL_1(q^2) \text{ or }\GL_1(q)\times
\GL_1(q)\] and $\bC_{\PGL_2(q)}(s)$ is an extension of
$\bC_{\GL_2(q)}(s)/\bZ(\GL_2(q))$ by a trivial group or a cyclic
group of order 2. The oddness of $|H|$ then implies that $H\cap
\PGL_2(q)$ is a subgroup of a cyclic subgroup (of order $q-\epsilon$
for $\epsilon=\pm1$) of $\PGL_2(q)$. Therefore, we would have
$|H|\leq q+1=b(S)$ if $H\subseteq \PGL_2(q)$. So we assume that
$H\nsubseteq \PGL_2(q)$. This in particular implies that $f\geq 3$.
We also assume furthermore that $q\neq 2^6$ as this case can be
checked easily. Also, recall from the hypothesis that $q\neq 2^3$.

Let $\nu$ be a generator of $\FF^*_{q^2}$, the multiplicative group
of $\FF_{q^2}$. Then $H\cap \PGL_2(q)$ is isomorphic to a subgroup
of $\langle \nu^{q+\epsilon}\rangle$. A field automorphism $\tau$
acting on $H\cap \PGL_2(q)$ by rasing each entry in the matrix to
its $\ell^{f_1}$th power (with $1\leq f_1\leq f$) then acts on
$\langle \nu^{q+\epsilon}\rangle$ by rasing each element to its
$\ell^{f_1}$th power. For that reason, we will identify $H\cap
\PGL_2(q)$ with its isomorphic image in $\langle
\nu^{q+\epsilon}\rangle$.

Our assumption on $q$ and $f$ guarantees that there exists a
primitive prime divisor, say $r$, of $q-\epsilon=\ell^f-\epsilon$.
(When $\epsilon=-1$, one can take $r$ to be a primitive prime
divisor of $\ell^{2f}-1$.) Then $\ell$ has order $f$ or $2f$ modulo
$r$, which implies that $r>f$.

Assume that $\nu^{(q^2-1)/r}\in H\cap \PGL_2(q)$. Recall that
$H\nsubseteq \PGL_2(q)$. We have that $H$ is an extension of $H\cap
\PGL_2(q)$ by some nontrivial field automorphisms. Let $\tau$ be
such an automorphism and suppose that $\tau$ acts on $\langle
\nu^{q+\epsilon}\rangle$ by rasing each element to its
$\ell^{f_1}$th power. Note that the order of this automorphism is
$f/\gcd(f_1,f)\leq f$. Since $H$ is nilpotent and $r>f$, $\tau$ must
act trivially on $\nu^{(q^2-1)/r}$. We then have
\[\nu^{\ell^{f_1}(q^2-1)/r}=\nu^{(q^2-1)/r},\] which implies that $r\mid
(\ell^{f_1}-1)$. This is impossible as $r$ is a primitive prime
divisor of either $\ell^f-1$ or $\ell^{2f}-1$.

So we must have that the element $\nu^{(q^2-1)/r}$ of order $r$ is
not in $H\cap \PGL_2(q)$, then
\[|H|\leq |H\cap \PGL_2(q)|f\leq (q-\epsilon)f/r<q-\epsilon\leq
b(S),\] and we are done.
\end{proof}


\section{Proof of Theorem \ref{thm1}}\label{section-nilpotent}


We first collect some lemmas which are needed in the proof of
Theorem \ref{thm1}. The next lemma is probably known somewhere else.

\begin{lemma}  \label{lem1}
Let $G$ be a nilpotent permutation group of degree $n$ and let $p$
be the smallest prime divisor of $|G|$. Then $|G| \leq ({p^{\frac 1
{p-1}}})^{n-1}$.
\end{lemma}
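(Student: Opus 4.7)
The plan is to first reduce to the case where $G$ acts transitively on the underlying set $\Omega$, and then prove the transitive case by induction on $|G|$, peeling off the Sylow $p$-subgroup.

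For the reduction, let $\Omega_1,\dots,\Omega_k$ be the $G$-orbits with $|\Omega_i|=n_i$, and let $K_i$ be the kernel of the action of $G$ on $\Omega_i$. Faithfulness of $G\leq \Sy_n$ gives $\bigcap_i K_i=1$, so $G\hookrightarrow\prod_i G/K_i$. Each $G/K_i$ is a transitive nilpotent permutation group of degree $n_i$ whose smallest prime divisor is at least $p$. Since the function $q\mapsto q^{1/(q-1)}$ is decreasing on primes, the transitive case would give
\[
|G|\leq\prod_i|G/K_i|\leq\prod_i\bigl(p^{1/(p-1)}\bigr)^{n_i-1}=\bigl(p^{1/(p-1)}\bigr)^{n-k}\leq\bigl(p^{1/(p-1)}\bigr)^{n-1},
\]
so it suffices to treat the transitive case.

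Now assume $G$ is transitive. If $G$ is a $p$-group, then $n=p^m$ for some $m$ and $|G|$ divides the order of a Sylow $p$-subgroup of $\Sy_n$, which equals $p^{(n-1)/(p-1)}$; this is the base of the induction. Otherwise write $G=P\times N$ with $P$ the Sylow $p$-subgroup and $N$ the Hall $p'$-subgroup. Coprimality of $|P|$ and $|N|$ forces $G_\alpha=P_\alpha\times N_\alpha$ for every $\alpha\in\Omega$, so $n=n_P\cdot n_N$ where $n_P$ and $n_N$ are the sizes of the $P$- and $N$-orbits through $\alpha$. The crux is to verify that $N$ acts faithfully on each of its own orbits. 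This follows from the fact that $P$ commutes with $N$ and permutes the $N$-orbits transitively: for $g\in P$ and $\nu\in N$ one has $\nu(g\alpha)=g(\nu\alpha)$, so the kernels of the $N$-actions on different orbits coincide with the kernel of $N$ on all of $\Omega$, which is trivial. Applying the induction hypothesis to $N$ on one of its orbits yields $|N|\leq\bigl(q^{1/(q-1)}\bigr)^{n_N-1}\leq\bigl(p^{1/(p-1)}\bigr)^{n_N-1}$, where $q>p$ is the smallest prime divisor of $|N|$; combining with $|P|\leq\bigl(p^{1/(p-1)}\bigr)^{n_P-1}$ and the elementary inequality $(n_P-1)(n_N-1)\geq 0$ produces
\[
|G|\leq\bigl(p^{1/(p-1)}\bigr)^{n_P+n_N-2}\leq\bigl(p^{1/(p-1)}\bigr)^{n_Pn_N-1}=\bigl(p^{1/(p-1)}\bigr)^{n-1}.
\]

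I expect the only real obstacle to be verifying the faithfulness of $N$ on each of its orbits; once that is in hand, the remaining arithmetic reduces to the single inequality $(n_P-1)(n_N-1)\geq 0$, and the bound on transitive $p$-subgroups of $\Sy_n$ is a classical consequence of the Sylow structure of symmetric groups.
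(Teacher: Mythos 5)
Your proof is correct, and it takes a genuinely different route from the one in the paper. The paper's argument works with the orbits of $\bZ(G)$: it reduces to the case where all $\bZ(G)$-orbits have a common size $s$ and $G/\bZ(G)$ permutes them transitively, proves that $\bZ(G)$ is then semiregular (so $|\bZ(G)|=s$), and closes the induction on $n$ with the analytic inequality $s\leq (p^{1/(p-1)})^{n-n/s}$, which rests on $p\leq n$ and the monotonicity of $x\ln x/(x-1)$. You instead reduce to the transitive case via the embedding $G\hookrightarrow\prod_i G/K_i$, split $G=P\times N$ into its Sylow $p$-part and Hall $p'$-part, use $n=n_Pn_N$, the exact $p$-part of $(p^m)!$, and the trivial inequality $(n_P-1)(n_N-1)\geq 0$; this avoids both the semiregularity claim and the calculus at the end, and your key observation --- that since $P$ and $N$ commute elementwise and $P$ is transitive on the set of $N$-orbits, the kernels of $N$ on its various orbits all coincide and hence are trivial --- is sound. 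The one step you use but do not justify is $|P|\leq (p^{1/(p-1)})^{n_P-1}$: this requires $P$ to act faithfully on the orbit $P\alpha$ (without faithfulness one only gets the $p$-part of $n!$ as a bound, which is too weak --- e.g.\ an elementary abelian group of order $4$ can have an orbit of size $2$), and it follows from the identical argument with the roles of $P$ and $N$ exchanged, since $N$ is likewise transitive on the set of $P$-orbits. You should state that symmetric step explicitly; with that sentence added the argument is complete.
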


\begin{proof}
As mentioned earlier, Vdovin proved that the maximal size of a
nilpotent subgroup of $\Sy_n$ is
\[2^{[n/2]+[n/2^2]+\cdots}\] if $n\not\equiv 3 \bmod
4$ and
\[3\cdot 2^{[(n-3)/2]+[(n-3)/2^2]+\cdots}\] if $n\equiv 3 \bmod
4$. Consequently we have $|G|\leq 2^{n-1}$ and so the lemma is
proved when $|G|$ is even. We will follow Vdovin's idea to prove our
lemma.

Let $O_1,O_2,...$ be the orbits of the action of $\bZ(G)$ on the set
$\{1,2,...,n\}$. If a permutation $\sigma\in G$ moves an element $k$
in $O_i$ to $O_j$, it is easy to see that $O_i$ and $O_j$ have the
same cardinality. Therefore $G/\bZ(G)$ permutes the orbits of the
same cardinality in $\{O_1,O_2,...\}$. Let $T_k$ be the union of
orbits of cardinality $k$ and let $n_i:=|T_k|$. It follows that
\[n=n_1+n_2+\cdots\] and
\[G\leq \Sy_{n_1}\times \Sy_{n_2}\times \cdots.\]

Suppose that there are at least two orbits of different
cardinalities. Then the conclusion of the previous paragraph shows
that $G$ is a subgroup of $\Sy_{m_1}\times \Sy_{m_2}$ for
$m_1+m_2=n$ and $m_1,m_2<n$. Using induction, we have
\[|G|\leq (p^{\frac{1}{p-1}})^{m_1-1}(p^{\frac{1}{p-1}})^{m_2-1},\]
which is clearly smaller than $(p^{\frac{1}{p-1}})^{n-1}$.

So we can now assume that all of the orbits have the same size, say
$s$. That means there are exactly $n/s$ orbits:
$O_1,O_2,...,O_{n/s}$. As mentioned above that $G/\bZ(G)$ permutes
these $O_1,O_2,...,O_{n/s}$, if $G/\bZ(G)$ has more than one orbits
on $\{O_1,O_2,...,O_{n/s}\}$, we can again see that $G$ can be
considered as a subgroup of $\Sy_{m_1}\times \Sy_{m_2}$ for
$m_1+m_2=n$ and $m_1,m_2<n$, and so the lemma follows. So we assume
that $G/\bZ(G)$ acts transitively on $\{O_1,O_2,...,O_{n/s}\}$.

We claim that $|\bZ(G)|=s$, which is equivalent to
$\Stab_{\bZ(G)}(1)$ is trivial. Without loss we assume that $1\in
O_1$ and let $\tau\in \Stab_{\bZ(G)}(1)$. Let $k$ be an arbitrary
element in $\{1,2,...,n\}$ and assume that $k\in O_i$. As $G/\bZ(G)$
acts transitively on $\{O_1,O_2,...,O_{n/s}\}$ and $\bZ(G)$ acts
transitively on $O_1$, there exist $\tau_1\in G$ and $\tau_2\in
\bZ(G)$ such that $(k^{\tau_1})^{\tau_2}=1$. Then
$k=(1^{\tau_2^{-1}})^{\tau_1^{-1}}$ and it follows that
\[k^\tau=((1^{\tau_2^{-1}})^{\tau_1^{-1}})^\tau=
((1^\tau)^{\tau_2^{-1}})^{\tau_1^{-1}}=(1^{\tau_2^{-1}})^{\tau_1^{-1}}=k.\]
Here we note that $\tau\in\bZ(G)$ commutes with $\tau_1$ and
$\tau_2$. We have shown that $\tau$ fixes every element in
$\{1,2,...,n\}$, which means that $\tau$ is trivial, as claimed.

Note that, as $G$ is nilpotent, we have $s>1$ and so using
induction, we deduce that
\[|G|=|\bZ(G)||G/\bZ(G)|\leq s(p^{\frac{1}{p-1}})^{\frac{n}{s}-1}.\]

To prove the lemma, we now need to show that
\[s(p^{\frac{1}{p-1}})^{\frac{n}{s}-1}\leq
(p^{\frac{1}{p-1}})^{n-1},\] which is equivalent to
\[s\leq (p^{\frac{1}{p-1}})^{n-\frac{n}{s}}.\]
Note that $p\leq n$ and hence $p^{\frac{1}{p-1}}\geq
n^{\frac{1}{n-1}}$. Thus it is enough to show
\[s\leq (n^{\frac{1}{n-1}})^{n-\frac{n}{s}},\] which is equivalent
to
\[\frac{n}{n-1} (1-\frac{1}{s})\ln n\geq \ln s.\]
This last inequality follows from the fact that the function
$f(x)=\dfrac{x\ln x}{x-1}$ is increasing on $[2,\infty)$ and $2\leq
s\leq n$.
\end{proof}

\begin{lemma}  \label{lem2}
Let $S$ be a transitive solvable permutation group on $\Omega$ with
$|\Omega|=n$. If $|S|$ is odd, then S has a regular orbit on the
power set $\mathscr{P}(\Omega)$ of $\Omega$.
\end{lemma}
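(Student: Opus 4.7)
My plan is to prove the lemma by induction on $n=|\Omega|$; the case $n=1$ is trivial. For the inductive step, I distinguish two cases according to whether $S$ is primitive or imprimitive on $\Omega$.

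If $S$ is imprimitive, choose a nontrivial block system $\{B_1,\dots,B_m\}$ of blocks of common size $k$ with $mk=n$ and $1<k<n$. Let $K$ be the kernel of the $S$-action on the set of blocks. Then $\bar S:=S/K$ is a transitive solvable group of odd order of degree $m<n$; and for each $i$, the block stabilizer of $B_i$ induces on $B_i$ a transitive solvable group of odd order of degree $k<n$. The inductive hypothesis thus supplies a subset $\mathcal J\subseteq\{1,\dots,m\}$ with trivial setwise $\bar S$-stabilizer, and, for each $i$, a subset $\Delta_i\subseteq B_i$ with trivial stabilizer inside the induced group on $B_i$. I would then form $\Delta\subseteq\Omega$ from these ingredients (roughly $\bigsqcup_{i\in \mathcal J}\Delta_i$, possibly augmented by markers in blocks outside $\mathcal J$) and argue: any $s\in S$ fixing $\Delta$ must permute the blocks in $\mathcal J$, hence lie in $K$; it then must fix each $\Delta_i$ inside $B_i$, hence act trivially there; with enough markers outside $\mathcal J$, it must be trivial everywhere.

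If $S$ is primitive, the structure theorem for primitive solvable groups gives $S=V\rtimes H$, with $V\cong\FP^d$ the unique minimal normal subgroup, regular on $\Omega$, and $H\leq\GL(V)$ faithful and irreducible of odd order. A regular $H$-orbit on $V$ --- whose existence follows from a Gluck-type regular orbit theorem for odd-order solvable linear groups --- produces a vector $v_0\neq 0$ with trivial $H$-stabilizer. Taking $\Delta=\{0,v_0\}\subseteq V=\Omega$, the subset is fixed by no nontrivial translation (since $|V|$ is odd), and any $h\in H$ with $h(\Delta)=\Delta+v$ forces either $h(v_0)=v_0$, hence $h=1$, or $h(v_0)=-v_0$ and thus $h^2=1$, contradicting $|H|$ odd.

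The main obstacle is the imprimitive case: assembling the local regular orbits (on the block system and inside each block) into a single subset whose full $S$-stabilizer is trivial. The delicate point is detecting elements of $K$ that act nontrivially on blocks outside $\mathcal J$; the construction of $\Delta$ must include enough information from those blocks to rule out such elements.
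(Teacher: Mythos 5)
The paper does not actually prove this statement: it is quoted verbatim as Gluck's theorem and disposed of by the citation \cite[Corollary 5.7]{manz/wolf}. Your proposal instead attempts a proof from scratch, and it stalls exactly where the classical proof needs its main idea. The primitive case is essentially fine (granting the regular orbit theorem for odd-order solvable linear groups on modules of odd order, which is itself a substantial theorem of Espuelas, not an easy consequence of anything; and your $\{0,v_0\}$ argument then works, using odd order and odd characteristic to kill the swap case). But the imprimitive case, which you correctly identify as ``the main obstacle,'' is a genuine gap, not a detail to be filled in: the naive assemblies all fail. If you put nothing (or the whole block) in the blocks outside $\mathcal J$, you get no control over how an element of $K$ acts there, since $K$ need not be trivial on those blocks. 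If you put regular-orbit representatives (``markers'') there, you lose the ability to conclude that a stabilizing element preserves $\mathcal J$: an $s\in S$ could carry a block $B_i$ with $i\in\mathcal J$ onto a block $B_j$ with $j\notin\mathcal J$, matching $\Delta_i$ to the marker $M_j$, because both are just nonempty proper subsets in regular orbits and nothing distinguishes them under the block-linking bijections induced by $S$. Even the natural refinement of using complementary pairs $\Delta_j$, $B_j\setminus\Delta_j$ (which do lie in distinct orbits of the induced group when $|S|$ is odd) does not resolve this, since there is no canonical correspondence between the regular orbits on different blocks that is respected by the elements of $S$ permuting them.

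Resolving this tension is precisely the content of the inductive reduction in Gluck's original argument (and in the Manz--Wolf treatment), and it requires a more global device --- e.g.\ exploiting that odd-order groups have \emph{several} regular orbits on each $P(B_i)$ and running a counting/colouring argument over the block system, rather than splicing one representative per block. As written, your plan asserts that ``with enough markers outside $\mathcal J$, it must be trivial everywhere,'' but no choice of markers is exhibited for which both requirements (distinguishing $\mathcal J$ from its complement, and pinning down $K$ on every block) hold simultaneously. Since the paper simply cites \cite[Corollary 5.7]{manz/wolf}, the efficient course is to do the same; if you want a self-contained proof, the imprimitive reduction must be reworked along the lines above.
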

\begin{proof}
This is Gluck's Theorem ~\cite[Corollary 5.7]{manz/wolf}.
\end{proof}

\begin{lemma}\label{coprimeaction}
Let $N$ be a nontrivial nilpotent $\pi$-group, where $\pi$ is a set
of primes, and assume that $N$ acts faithfully on a $\pi'$-group
$H$. Then there exists $x \in H$ such that $|\bC_N(x)| \leq
(|N|/p)^{1/p}$, where $p$ is the smallest member of $\pi$.
\end{lemma}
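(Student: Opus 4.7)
The plan is to prove the lemma by induction on $|H|$ in the coprime-action framework, with Lemma \ref{lem1} and Lemma \ref{lem2} as the principal inputs. It is convenient to rewrite the target inequality in terms of orbit sizes: the conclusion is equivalent to producing an $x \in H$ whose $N$-orbit has cardinality at least $(p\cdot|N|^{p-1})^{1/p}$.

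\textbf{Reduction.} Since $|N|$ and $|H|$ are coprime, $N$ normalizes a Sylow decomposition of $H$ and acts on each Frattini quotient. Standard coprime-action arguments let one replace $H$ by a proper $N$-invariant subgroup or quotient on which $N$ remains faithful, and the inductive bound $(|N|/p)^{1/p}$ lifts cleanly because $|\bC_N(x)| = |K|\cdot |\bC_{N/K}(x)|$ when $K$ is the kernel of the action on the reduced piece. Carrying this out brings us to the situation where $H=V$ is an elementary abelian $q$-group for some prime $q\notin\pi$ carrying a faithful $\FQ[N]$-action, and, after one further reduction, one may assume $V$ is $N$-irreducible.

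\textbf{Orbit argument.} Fix an $N$-orbit $\Omega = N\cdot v \subseteq V$ of maximal size and let $K=\bC_N(\Omega)$ be its pointwise stabilizer, so $\bar N := N/K$ is a transitive nilpotent permutation group of degree $|\Omega|$. By Lemma \ref{lem1}, $|\bar N| \leq (p^{1/(p-1)})^{|\Omega|-1}$, which gives a lower bound on $|\Omega|$ in terms of $|\bar N|$. When $|N|$ is odd (so $p\geq 3$), Lemma \ref{lem2} applied to $\bar N$ yields a subset $\Delta\subseteq\Omega$ with $\bC_{\bar N}(\Delta)=1$; putting $x:=\sum_{w\in\Delta}w\in V$ gives $\bC_N(x)=K$. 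Choosing $\Omega$ to minimize $|K|$, and using the faithfulness of $N$ on $V$ (which forces the intersection of pointwise stabilizers of all orbits to be trivial), one combines the Lemma \ref{lem1} estimate with the maximality of $|\Omega|$ to obtain $|K|\leq (|N|/p)^{1/p}$.

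\textbf{Main obstacle.} The genuinely hard case is $p=2$: here Gluck's theorem (Lemma \ref{lem2}) is unavailable, and the bound $(|N|/2)^{1/2}$ is tight, as illustrated by the example following Theorem \ref{thm1}. My approach would be to write $N = P\times L$ with $P$ the Sylow $2$-subgroup and $L$ the odd-order complement, apply Lemma \ref{lem2} to $L$ first to reduce its contribution to the centralizer to the trivial group, and then control $P$'s contribution via Lemma \ref{lem1} specialized to $p=2$. Verifying the numerical bound when $N$ is a $2$-group acting irreducibly and admitting no regular orbit — so that the permutation-theoretic estimate of Lemma \ref{lem1} must be used in a sharp form together with the module structure of $V$ — is the most delicate point of the argument.
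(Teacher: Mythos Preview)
The paper does not prove this lemma at all: its entire proof is the single sentence ``This is due to Isaacs~\cite[Theorem B]{IMI2}.'' So there is no in-paper argument to compare against; the question is whether your attempted independent proof is sound.

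It is not, and the gaps are genuine rather than cosmetic. First, in your orbit argument you set $x:=\sum_{w\in\Delta}w$ and assert $\bC_N(x)=K$. The containment $K\subseteq\bC_N(x)$ is clear, but the reverse fails in general: if $g\in N$ satisfies $g\Delta\neq\Delta$ yet $\sum_{w\in g\Delta}w=\sum_{w\in\Delta}w$ (which can happen whenever the orbit elements are linearly dependent in $V$, a situation you have not ruled out), then $g\in\bC_N(x)\setminus K$. The regular-orbit-on-power-set device controls the \emph{setwise} stabiliser of $\Delta$, not the stabiliser of its sum in $V$. Second, even granting $\bC_N(x)=K$, your deduction that $|K|\leq(|N|/p)^{1/p}$ is asserted but not derived: Lemma~\ref{lem1} gives an \emph{upper} bound $|\bar N|\leq p^{(|\Omega|-1)/(p-1)}$, which bounds $|N/K|$ from above, whereas you need to bound $|K|$ from above (equivalently $|N/K|$ from below). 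Nothing in what you wrote connects these. Third, you explicitly leave the case $p=2$ as an ``obstacle'' with only a heuristic sketch; since that is exactly the sharp case, this means the proof is incomplete on its face.

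Isaacs's actual argument in \cite{IMI2} is quite different in flavour: it does not go through regular orbits on power sets, but rather through an induction combined with a careful averaging/counting of centralizer sizes over $H$, together with a reduction that exploits the nilpotency of $N$ via its centre. If you want a self-contained proof, that is the paper to read; the route via Lemmas~\ref{lem1} and~\ref{lem2} does not seem to reach the stated bound.
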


\begin{proof}
This is due to Isaacs ~\cite[Theorem B]{IMI2}.
\end{proof}

We now prove the second main result, which we restate for reader's
convenience. We will frequently use the following fact without
mention: if $L$ is a subgroup or a quotient group of $G$, then $b(L)
\leq b(G)$. Also, the Frattini subgroup of $G$ is the intersection
of all maximal subgroups of $G$ and is denoted by $\Phi(G)$. We use
$|G|_{nil}$ to denote the maximum order of all the nilpotent
subgroups of $G$ and $|G|_{abelian}$ to denote the maximum order of
all the abelian subgroups of $G$.

\begin{theorem} \label{thm1-repeat}
Let $H$ be a nilpotent subgroup of a nonabelian finite group $G$.
Let $\pi:=\pi(|H|)$ and $p$ the smallest prime in $\pi$. Then $|H
\bO_{\pi}(G)/ \bO_{\pi}(G)| \leq (b(G)^p/p)^{\frac {1} {p-1}}$.
\end{theorem}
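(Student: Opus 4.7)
The plan is to argue by induction on $|G|$. A routine reduction passes from $(G, H)$ to $(G/\bO_\pi(G), H\bO_\pi(G)/\bO_\pi(G))$; since $q \mapsto (b(G)^q/q)^{1/(q-1)}$ is non-increasing on primes $q \leq b(G)$, and $b(G) \geq 2$ because $G$ is non-abelian, this reduces the theorem to the case $\bO_\pi(G) = 1$, where the claim becomes $|H| \leq (b(G)^p/p)^{1/(p-1)}$. With $\bO_\pi(G) = 1$, every normal abelian subgroup of $G$ lies in $\bF(G)$, which is a $\pi'$-group, so $Z(\bF^*(G)) \leq \bF(G)$ and hence $C_H(\bF^*(G)) \leq H \cap \bF(G) = 1$; thus $H$ acts faithfully on $\bF^*(G) = \bF(G) E(G)$. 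I split on whether $\bF(G) = 1$.

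\emph{Case $\bF(G) = 1$.} Then $\bF^*(G) = E(G) = S_1 \times \cdots \times S_k$ with each $S_i$ non-abelian simple (since $Z(E(G)) \leq \bF(G) = 1$), and $G \hookrightarrow \Aut(E(G))$. The nilpotent permutation action of $H$ on $\{S_1, \ldots, S_k\}$ has image $\overline H \leq \Sy_k$ satisfying $|\overline H| \leq (p^{1/(p-1)})^{k-1}$ by Lemma \ref{lem1}. The kernel $H_0$ projects into each $\Aut(S_i)$ as a nilpotent subgroup $H_{0,i}$, and Theorem \ref{theorem-nilpotent-simple} (the $\PSL_2(8)$ exception handled directly via $b(\Aut(\PSL_2(8))) = 27$) gives $|H_{0,i}| \leq (b(S_i)^p/p)^{1/(p-1)}$. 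Multiplying and using $b(G) \geq b(E(G)) = \prod_i b(S_i)$ yields
\[
|H| = |H_0|\cdot|\overline H| \leq \prod_{i=1}^k \left(\frac{b(S_i)^p}{p}\right)^{1/(p-1)} \cdot (p^{1/(p-1)})^{k-1} = \left(\frac{b(G)^p}{p}\right)^{1/(p-1)}
\]
after the $p^{1/(p-1)}$ factors telescope.

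\emph{Case $\bF(G) \neq 1$.} Pick an abelian minimal normal subgroup $M \leq \bF(G)$, an elementary abelian $q$-group for some $q \in \pi'$; coprimality of $|H|$ and $|M|$ gives $H \cap M = 1$ and an $H$-equivariant bijection $M \cong \Irr(M)$. Let $K := C_H(M) \trianglelefteq H$, so $H/K$ acts faithfully on $M$. Lemma \ref{coprimeaction} supplies $\lambda \in \Irr(M)$ whose $H$-orbit has size at least $(p|H/K|^{p-1})^{1/p}$, and Clifford theory on $M \trianglelefteq HM \leq G$ yields $\chi \in \Irr(HM \mid \lambda)$ of at least this degree. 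Hence $b(G) \geq b(HM) \geq (p|H/K|^{p-1})^{1/p}$, i.e.,
\[|H/K| \leq (b(G)^p/p)^{1/(p-1)},\]
which finishes the proof when $K = 1$.

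The main obstacle is the subcase $K = C_H(M) \neq 1$: the Clifford argument bounds only $|H/K|$, not $|H|$. Descending to $G/M$ is blocked because $\bO_\pi(G/M)$ can be nontrivial (Schur--Zassenhaus produces a non-normal Hall $\pi$-complement of $M$ in its preimage); inducting on the proper subgroup $HM$ yields only $|H|/|K| \leq (b(G)^p/p)^{1/(p-1)}$. A successful resolution likely combines a judicious choice of $M$ (ideally a minimal normal subgroup on which $H$ acts faithfully, using faithfulness of $H$ on $\bF^*(G)$), a Gluck-style regular-orbit argument on $\mathscr{P}(M)$ via Lemma \ref{lem2} in the odd-order case, and a direct character construction for the edge case $G = HM$.
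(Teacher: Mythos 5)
Your reduction to $\bO_\pi(G)=1$ and your case $\bF(G)=1$ broadly parallel the paper's strategy (Lemma \ref{lem1} for the permutation part, Theorem \ref{theorem-nilpotent-simple} componentwise, telescoping against $b(G)\geq\prod_i b(S_i)$). But the case $\bF(G)\neq 1$, which you explicitly leave open, is a genuine gap, and the paper's resolution is precisely the ingredient you are missing. One first reduces to $\Phi(G)=1$: if $N\leq\Phi(G)$ is minimal normal (hence a $\pi'$-group, as $\bO_\pi(G)=1$) and $\bO_\pi(G/N)>1$, a Frattini argument applied to a Hall $\pi$-subgroup $K$ of its preimage gives $G=N\bN_G(K)=\bN_G(K)$, contradicting $\bO_\pi(G)=1$; so one may induct on $G/N$. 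With $\Phi(G)=\bO_\pi(G)=1$ and all minimal normal subgroups solvable, $\bF:=\bF(G)$ is an abelian $\pi'$-group with $\bC_G(\bF)=\bF$. The decisive step is then to pass to the \emph{subgroup} $K=H\bF$ rather than to a quotient: $\bO_\pi(K)$ centralizes the $\pi'$-group $\bF$, hence lies in $\bC_G(\bF)=\bF$, hence is trivial; so if $K<G$, induction applied to $K$ bounds all of $|H|$ (not just $|H/\bC_H(M)|$), and if $K=G$ then $H$ acts faithfully on the abelian group $\bF$ and Lemma \ref{coprimeaction} applies with no residual kernel. Your attempted descent founders because a single minimal normal subgroup $M$ need not satisfy $\bC_G(M)=M$; replacing $M$ by all of $\bF$ and inducting on the subgroup $H\bF$ is the fix. (When a nonsolvable minimal normal subgroup $V$ exists, the paper runs the same subgroup-induction on $H(V\times\bC_G(V))$, with $\bC_G(V)$ handled by induction; this also covers your mixed situation where $\bF(G)\neq 1$ but components are present.)

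A second, smaller gap: in your case $\bF(G)=1$ the exceptional pair $(S_i,p)=(\PSL_2(8),3)$ is not actually ``handled directly via $b(\Aut(\PSL_2(8)))=27$.'' Your telescoping uses $b(G)\geq\prod_i b(S_i)$ with $b(\PSL_2(8))=9$, while a Sylow $3$-subgroup of $\Aut(\PSL_2(8))$ has order $27>(9^3/3)^{1/2}\approx 15.6$, so the componentwise inequality genuinely fails and cannot be repaired by citing $b$ of the full automorphism group, which need not occur as a section contributing multiplicatively to $b(G)$. The paper devotes a separate argument to this case: since $p=3$ forces $|H|$ to be odd, Gluck's regular-orbit theorem (Lemma \ref{lem2}) applied to the action on the set of components lets one choose $\chi=\prod_i\chi_i$ with each $\chi_i$ of degree $7$ or $9$ and $\bI_G(\chi)=V\times\bC_G(V)$, so that $\chi\times\psi$ induces irreducibly to $G$; comparing the resulting degree $|G:V\times\bC_G(V)|\,\chi(1)\,\psi(1)\leq b(G)$ with $|H|\leq |G:V\times\bC_G(V)|\cdot 9^k\cdot|H\cap\bC_G(V)|$ and $9^k\leq (7^k)^{3/2}/3^{1/2}$ closes the case. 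You would need an argument of this kind; the one-line dismissal does not suffice.
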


\begin{proof}


Suppose that $\bO_{\pi}(G) >1$. If $G/\bO_{\pi}(G)$ is abelian, then
$H \leq \bO_{\pi}(G)$ and $|H\bO_{\pi}(G)/\bO_{\pi}(G)|=1$, and we
are done. If $G/\bO_{\pi}(G)$ is nonabelian, then induction yields
the required result.

Thus we may assume from now on that $\bO_{\pi}(G)=1$.

Suppose that $\Phi(G)>1$ and let $N$ be a minimal $G$-invariant
subgroup of $\Phi(G)$. Then $N$ is a $r$-group for some $r\not\in
\pi$.  Assume that $T/N=\bO_{\pi}(G/N)>1$. Since $(|T/N|,|N|)=1$,
there exists a Hall $\pi$-subgroup $K$ of $T$ such that $T=K \ltimes
N$.  Since all Hall $\pi$-subgroups of $T$ are conjugate, we deduce
by the Frattini argument that $G = T \bN_G(K)= N \bN_G(K)$. As
$N\leq \Phi(G)$, it follows that $G=\bN_G(K)$. Consequently
$\bO_{\pi}(G)\geq K>1$, a contradiction. Thus we must have
$\bO_{\pi}(G/N)=1$.

Assume that $G/N$ is abelian. Then $G$ is nilpotent and so it
possesses a normal abelian Hall $\pi$-subgroup. This implies that
$H\leq \bO_{\pi}(G)=1$, which makes the theorem obvious. Assume that
$G/N$ is nonabelian. Then one can use induction with the note that
$b(G)\geq b(G/N)$ and $|(HN/N \bO_{\pi}(G/N))/\bO_{\pi}(G/N)|=|H|$.
Hence we may assume that $\Phi(G)=1$.

Assume  that all minimal normal subgroups of $G$ are solvable. Let
$\bF$ be the Fitting subgroup of $G$. Since
$\Phi(G)=\bO_{\pi}(G)=1$, we see that $G = \bF \rtimes A$ is a
semidirect product of an abelian $\pi'$-group $\bF$ and a group $A$
which is isomorphic to $G/\bF$. Clearly, $\bC_G(\bF) = \bC_A(\bF)
\times \bF$ and $\bC_A(\bF) \vartriangleleft G$. Since $\bF$
contains all the minimal normal subgroups of $G$, we conclude that
$\bC_A(\bF) = 1$, and hence, $\bC_G(\bF) = \bF$.

Let us investigate the subgroup $K = H\bF$. Since $\bO_{\pi}(K)$
centralizes the $\pi'$-group $F$ and hence $\bO_{\pi}(K) \leq
\bC_G(\bF)=\bF$, it follows that $\bO_{\pi}(K)=1$. Assume that $K<
G$. Note that $H>1$ and $H\bF$ is nonabelian, the result follows by
induction. Therefore, we may assume that $G= K = H\bF$. Observe that
$G = H\bF$ is solvable and $H$ acts faithfully on the abelian
$p'$-group $\bF$. By Lemma~\ref{coprimeaction}, there exists some
linear $\lambda \in \Irr(\bF)$ such that $|\bC_H(\lambda)| \leq
(|H|/p)^{\frac 1 p}$, and so that $b(G)\geq
p^{\frac{1}{p}}|H|^{\frac {p-1} {p}}$. Therefore, the theorem holds
in this case.

Now we assume that $G$ has a nonsolvable minimal normal subgroup $V$.
Set $V = V_1 \times \cdots \times V_k$,
where $V_1, \ldots, V_k$ are isomorphic nonabelian simple groups.
Let us investigate the subgroup $K = H(V \times \bC_G(V))$.

Since $V$ is a direct product of nonabelian simple groups,
$\bO_{\pi}(V)=1$. This implies that $V \cap \bO_{\pi}(K)=1$. Since
$V$ and $\bO_{\pi}(K)$ are both normal in $K$, this implies that
$\bO_{\pi}(K)$ centralizes $V$, and so, $\bO_{\pi}(K) \leq
\bC_G(V)$, and hence, $\bO_{\pi} (K) \le \bO_{\pi}(\bC_G(V))$. Since
$\bC_G(V)$ is normal in $G$, we see that $\bO_{\pi} (\bC_G(V)) \le
\bO_{\pi} (G) = 1$. Thus, we conclude that $\bO_{\pi}(K) = 1$.
Therefore we may assume by induction that $K=G$, i.e., $G/(V \times
\bC_G(V ))$ is a nilpotent $\pi$-group.

Clearly $\bO_{\pi}(\bC_G(V))=1$. If $\bC_G(V)$ is not ableian, then
by induction there exists $\psi \in \Irr(\bC_G(V))$ such that
$\psi(1)\geq p^{\frac{1}{p}} |H \cap \bC_G(V)|^{\frac{p-1}{p}}$. If
$\bC_G(V)$ is ableian, then all $\psi \in \Irr(\bC_G(V))$ has degree
$1$ and $|H \cap \bC_G(V)|=1$. Thus in all cases, we have
$\psi(1)\geq |H \cap \bC_G(V)|^{\frac{p-1}{p}}$.

Let $\chi_i \in \Irr(V_i)$ such that $\chi_i(1) = b(V_i)$ and set
$\chi = \chi_1 \times \cdots \times \chi_k$. Clearly $\chi \in
\Irr(V)$ and $\chi(1) = \chi_1^k(1)$. Note that $G/(V \times
\bC_G(V)) \leq \Out(V)$ and $\Out(V) \cong \Out(V_1) \wr S_k$, we
have $G /\bC_G(V) \lesssim \Aut(V_1) \wr S_k$. By Lemma~\ref{lem1},
we have \[|H\bC_G(V)/\bC_G(V)| \leq p^{\frac{k-1}{p-1}}
(|\Aut(V_1)|_{nil})^k,\] where $|X|_{nil}$ denotes the largest size
of a nilpotent subgroup of $X$.

Suppose that $V_1 \not \cong A_1(2^3)$ or $p \neq 3$. By
Theorem~\ref{theorem-nilpotent-simple}, we have
\begin{align*}|H\bC_G(V)/\bC_G(V)| &\leq (p^{\frac 1 {p-1}})^{k-1}
(|\Aut(V_1)|_{nil})^k \\
&\leq (p^{\frac 1 {p-1}})^{-1} (\chi_1(1)^{\frac p {p-1}})^{k}\\
&=p^{\frac{-1}{p-1}} \chi(1)^{\frac{p}{p-1}}.\end{align*} Note that
$\chi\times \psi$ is an irreducible character of $V\times \bC_G(V)$.
We get that
\begin{align*} b(G)&\geq b(V\times \bC_G(V)) \geq
\psi(1)\chi(1)\\
&\geq |H\cap \bC_G(V)|^{\frac{p-1}{p}}
(|H\bC_G(V)/\bC_G(V)|)^{\frac{p-1}{p}}p^{\frac{1}{p}}\\
&=(|H|)^{\frac{p-1}{p}}p^{\frac{1}{p}},\end{align*} and we are done.

It remains to consider the case $V_i \cong A_1(2^3)$ and $p=3$. This
implies that $H$ is of odd order. By Atlas~\cite{Atlas}, we may take
$\mu_i, \nu_i \in \Irr(V_i)$ such that $\mu_i(1)=7$, $\nu_i(1)=9$
and $\bI_{\bN_G(V_i)}(\mu_i) = \bI_{\bN_G (V_i)}(\nu_i)= \bC_G(V_i)
\times V_i$. Using Lemma~\ref{lem2} and the fact that $H$ is of odd
order, we see that there exists $\chi = \prod_i(\chi_i) \in \Irr(V)$
such that  $\chi_i \in \{\mu_i, \nu_i\}$ and $\bI_G(\chi) = V\times
\bC_G(V)$. Clearly  $\chi(1) \geq 7^k$.

Since $\bI_G(\chi) = V\times \bC_G(V)$, $\chi \times \psi \in
\Irr(V\times \bC_G(V))$ induces an irreducible character of $G$, and
this implies that
\[|G : (V \times \bC_G(V))| \chi(1) \psi(1) \leq  b(G).\]
Since $k = |G : \bN_G(V_1)|$, we may write $|G : (V \times
\bC_G(V))| = ka$.



We have $|H| \leq ka9^k |H \cap \bC_G (V)|$. Since $9^k \le
(7^k)^{3/2}/3^{1/2}$ and $ka \le (ka)^{3/2}$ as both $k$ and $a$ are
both at least 1, it follows that

\[|H| \le (ka)^{3/2} ((7^k)^{3/2}/ 3^{1/2}) |H \cap \bC_G (V)|.\]
Now since $7^k \le \chi(1)$, $|G:V \times \bC_G(V)| = ka$ and $|H
\cap \bC_G (V)| \le \psi (1)^{3/2}$, the previous inequality yields
\[|H| \le |G:V \times \bC_G(V)|^{3/2} \chi(1)^{3/2}
\psi(1)^{3/2}/3^{1/2}.\] Finally, we know that $|G:V \times \bC_G
(V)| \chi (1) \psi (1) \le b(G)$, and so we have \[3^{1/2} \cdot |H|
\le (b(G))^{3/2},\] which is the desired inequality.
\end{proof}

The following consequence of Theorem~\ref{thm1} is the main result
of \cite{QianYang1}.

\begin{corollary}\label{onep}
Let $P$ be a Sylow $p$-subgroup of a non-abelian finite group $G$.
Then $|P/ \bO_{p}(G)| \leq (b(G)^p/p)^{\frac {1} {p-1}}$.
\end{corollary}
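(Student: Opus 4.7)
The plan is to derive this as an immediate specialization of Theorem~\ref{thm1}. I would apply the theorem with $H = P$, a Sylow $p$-subgroup of $G$. Since $P$ is in particular nilpotent, the hypothesis is satisfied, and $\pi := \pi(|P|) = \{p\}$ since $P$ is a $p$-group, so the smallest prime in $\pi$ is $p$ itself. Then $\bO_\pi(G) = \bO_p(G)$, and Theorem~\ref{thm1} gives
\[
|P\, \bO_p(G)/\bO_p(G)| \leq \bigl(b(G)^p/p\bigr)^{1/(p-1)}.
\]

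The only remaining point is to identify $|P\,\bO_p(G)/\bO_p(G)|$ with $|P/\bO_p(G)|$. For this I would recall the standard fact that $\bO_p(G)$, being a normal $p$-subgroup of $G$, is contained in every Sylow $p$-subgroup (it lies in some Sylow $p$-subgroup and is normal, and all Sylow $p$-subgroups are conjugate). Hence $\bO_p(G) \leq P$, so $P\,\bO_p(G) = P$ and $|P\,\bO_p(G)/\bO_p(G)| = |P/\bO_p(G)|$, yielding the corollary.

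Since this is purely an instantiation plus a trivial containment observation, there is no real obstacle; the substance has been carried out in the proof of Theorem~\ref{thm1}. The only minor thing to verify is that $G$ nonabelian justifies invoking the theorem (which assumes $G$ nonabelian), and this is exactly the hypothesis stated in the corollary.
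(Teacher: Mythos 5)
Your proposal is correct and matches the paper exactly: the paper likewise proves this as an immediate special case of Theorem~\ref{thm1}. Your extra remark that $\bO_p(G)\leq P$ (so $P\bO_p(G)=P$) is a harmless and valid elaboration of the same argument.
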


\begin{proof}
This is special case of Theorem~\ref{thm1}.
\end{proof}


\section{Abelian subgroups and proof of Theorem
\ref{thm2}}\label{section-abelian}
We start this section with a variant of Lemma~\ref{lem2} for abelian
groups.

\begin{lemma}  \label{lem3}
Let $S$ be a transitive solvable permutation group on $\Omega$ with
$|\Omega|=n$. If $S$ is abelian, then $S$ has a regular orbit on the
power set $\mathscr{P}(\Omega)$ of $\Omega$.
\end{lemma}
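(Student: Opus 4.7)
The plan is to exploit the classical fact that a transitive abelian permutation group is automatically regular, which collapses the power-set claim to an essentially trivial observation.

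First, I would show that $S$ acts regularly on $\Omega$. For any $\omega\in\Omega$, let $S_\omega$ denote the point stabilizer. Since $S$ is transitive, any two point stabilizers are $S$-conjugate; but $S$ is abelian, so conjugation is trivial, and therefore $S_\omega = S_{\omega'}$ for all $\omega,\omega'\in\Omega$. The common intersection $\bigcap_{\omega\in\Omega}S_\omega$ is the kernel of the action of $S$ on $\Omega$, which is trivial because $S$ is a permutation group. Hence every $S_\omega$ is trivial, $|S|=n$, and $S$ acts regularly on $\Omega$.

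Next, I would pick any $\omega_0\in\Omega$ and consider the singleton $\Delta:=\{\omega_0\}\in\mathscr{P}(\Omega)$. Its setwise stabilizer in $S$ equals the pointwise stabilizer $S_{\omega_0}=1$, so the $S$-orbit of $\Delta$ has cardinality $|S|=n$ and is therefore a regular orbit on $\mathscr{P}(\Omega)$.

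There is essentially no obstacle in this argument. In contrast to Lemma~\ref{lem2}, where Gluck's theorem was genuinely required to handle odd-order solvable groups, the abelian case reduces at once to the elementary fact that an abelian transitive action is regular, and a singleton subset already furnishes the desired regular orbit on $\mathscr{P}(\Omega)$.
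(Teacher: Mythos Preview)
Your proof is correct, and in fact considerably more elementary than the paper's. You use the classical observation that a transitive abelian permutation group is automatically regular, so any singleton already has trivial setwise stabilizer and furnishes a regular orbit on $\mathscr{P}(\Omega)$.

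The paper instead views $\mathscr{P}(\Omega)$ as a $\GF(2)$-module via symmetric difference, quotes Gluck's result (Manz--Wolf, Corollary~5.7) for the primitive case, and then appeals to Carlip's induction theorem to pass to the imprimitive case. This is the same template used for Lemma~\ref{lem2} (odd-order solvable groups), where the action need not be regular and the deeper machinery is genuinely required. For the abelian case, however, that framework is overkill: your direct argument bypasses both citations entirely and makes the lemma self-contained. The trade-off is that the paper's approach is uniform across Lemmas~\ref{lem2} and~\ref{lem3}, whereas yours exploits a feature special to the abelian setting.
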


\begin{proof}
Note that if for $A, B \in \mathscr{P}(\Omega)$ we define $A+B=(A
\cup B)-(A \cap B)$, then $\mathscr{P}(\Omega)$ with this addiction
becomes a $\GF(2)$-module. By \cite[Corollary 5.7]{manz/wolf} we
know that the result holds if $S$ is primitive and the induction
follows by \cite[Theorem 2.10]{Carlip}.
\end{proof}

\begin{lemma}  \label{lem4}
Let $G=H \wr S$, where $H$ is nontrivial and $S$ is a permutation
group of degree $n$. Let $A$ be an abelian subgroup of $G$. Assume
that the maximum order of an abelian subgroup of $H$ is $a$, then
$|A| \leq a^n$.
\end{lemma}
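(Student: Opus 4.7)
The plan is to split an abelian $A \leq G = H^n \rtimes S$ into its ``base part'' and ``top part'' and bound each separately. Write $B := A \cap H^n$ and let $T$ be the image of $A$ in $S$, so that $|A| = |B| \cdot |T|$ and $T$ is an abelian subgroup of $S$. Let $O_1, \ldots, O_k$ denote the orbits of $T$ on $\{1,\dots,n\}$ and set $n_j := |O_j|$.

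The bound on $|T|$ is routine: since $T$ is abelian and transitive on each $O_j$, its action on $O_j$ is regular, so the kernel $T_{(O_j)}$ of that action has index $n_j$ in $T$; as $\bigcap_j T_{(O_j)} = 1$, we obtain $|T| \leq \prod_{j=1}^k n_j$.

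The heart of the argument is showing $|B| \leq a^k$. Writing $H^n = \prod_j H^{O_j}$ and letting $B_j$ be the projection of $B$ onto $H^{O_j}$, we have $B \leq \prod_j B_j$, so it suffices to prove $|B_j| \leq a$ for every orbit $O_j$. Fix a point $i_0 \in O_j$; by regularity of $T$ on $O_j$, for each $i \in O_j$ we may choose an element $a_i = (h^{(i)}; \sigma_i) \in A$ with $\sigma_i^{-1}(i_0) = i$. Since $A$ is abelian, every $b = (b_1,\dots,b_n) \in B$ satisfies $a_i b = b a_i$ in $H^n \rtimes S$, which unwinds componentwise to
\[
 b_i \;=\; b_{\sigma_i^{-1}(i_0)} \;=\; (h^{(i)}_{i_0})^{-1}\, b_{i_0}\, h^{(i)}_{i_0}.
\]
Thus the entire $O_j$-restriction of $b$ is determined by $b_{i_0}$, so the coordinate-$i_0$ projection $B_j \to H$ is an injective group homomorphism. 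Its image is an abelian subgroup of $H$, hence has order at most $a$, giving $|B_j| \leq a$ and consequently $|B| \leq a^k$.

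Combining the two bounds, $|A| \leq a^k \prod_{j=1}^k n_j$; since $H$ is nontrivial we have $a \geq 2$, and the elementary inequality $n_j \leq 2^{n_j - 1} \leq a^{n_j - 1}$ yields $|A| \leq a^k \cdot a^{(n_1-1) + \cdots + (n_k - 1)} = a^n$, as required. The main obstacle is really the conjugation computation in Step 3: one must resist the temptation to conclude that $B$ lies in the fixed-point set $(H^n)^T$ (which would be true only if $H$ were abelian), and instead exploit the abelianness of the full group $A$ to force $B_j$ to embed into a single factor $H$ via conjugation-determinism along each $T$-orbit.
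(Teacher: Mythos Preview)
Your argument is correct, and it differs substantially from the paper's own proof---in a way that is arguably cleaner and more robust.

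The paper opens with ``Suppose that $A = B \times T$'' where $B \leq H^n$ and $T \leq \Sy_n$, then invokes Vdovin's bound on abelian subgroups of $\Sy_m$ to show that $T$ must move more than $m$ points (where $2^m \leq |T| < 2^{m+1}$), and from this asserts that $B$ embeds in $H^{n-m-1}$. That line of reasoning tacitly treats $A$ as an internal direct product of a base subgroup and a permutation subgroup, which is not true for a general abelian $A \leq H \wr S$ (consider the cyclic group of order~$4$ inside $\ZZ_2 \wr \ZZ_2 \cong D_8$). Your decomposition $B = A \cap H^n$ with $T$ the image of $A$ in $S$, so that $|A| = |B|\cdot|T|$, is the honest one and needs no such hypothesis.

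The second difference is in how you bound $|B|$. Rather than counting moved points, you work orbit by orbit and exploit that each lift $a_i \in A$ (not merely its image $\sigma_i \in T$) commutes with every $b \in B$; this is precisely what produces the conjugation relation $b_i = (h^{(i)}_{i_0})^{-1} b_{i_0}\, h^{(i)}_{i_0}$ and hence the injection $B_j \hookrightarrow H$ into a single abelian subgroup of $H$. Combined with $|T| \leq \prod_j n_j$ (regularity of abelian transitive actions) and the elementary $n_j \leq a^{n_j - 1}$, you get the sharp orbit-wise estimate $|A| \leq a^k \prod_j n_j \leq a^n$. Your proof is self-contained---no appeal to Vdovin's classification---and your closing caveat about not confusing $B$ with the $T$-fixed subgroup $(H^n)^T$ is exactly the subtlety that makes the non-split case work.
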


\begin{proof} Suppose that $A=B\times T$, where $B$ is an abelian
subgroup of ${H\times\cdots\times H}$ ($n$ times) and $T$ is an
abelian subgroup of $\Sy_n$. As the lemma is obvious when $T$ is
trivial, we assume that $|T|>1$. Let $m$ be the integer such that
\[2^m\leq |T|<2^{m+1}.\]

The maximal size of an abelian subgroup of $\Sy_m$ is at most
$3^{m/3}$, see \cite[Theorem 1.1]{Vdovin2}. As $3^{m/3}<2^m$ and $T$
is abelian, we deduce that
\[\Stab_{\{1,2,...,n\}}(T)<n-m.\]
It follows that $B$ can be considered as a subgroup of
${H\times\cdots\times H}$ ($n-m-1$ times), which implies that
\[|B|\leq a^{n-m-1}.\] We now have
\[|A|=|B|\cdot|T|< a^{n-m-1}\cdot 2^{m+1}\leq a^n,\] as desired.
\end{proof}

\begin{theorem}\label{abeliancase}
Let $A$ be an abelian subgroup of $\Aut(S)$, where $S$ is a
nonabelian simple group. Then $|A|\leq b(S)$ unless $S=\Al_5$.
\end{theorem}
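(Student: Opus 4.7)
The plan is to follow the same case-by-case strategy as Theorem \ref{theorem-nilpotent-simple}, but at each step to replace the general nilpotent bound with the sharper bound available for abelian subgroups, so that the factor $p^{-1/(p-1)}b(S)^{1/(p-1)}$ in the nilpotent conclusion collapses to $1$ and we obtain $|A|\leq b(S)$.

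For \emph{alternating groups} $S=\Al_n$ with $n\geq 6$, I would invoke Vdovin's bound \cite[Theorem 1.1]{Vdovin2}: the maximal abelian subgroup of $\Sy_n=\Aut(\Al_n)$ has order at most roughly $3^{\lceil n/3\rceil}$. For $n\geq 7$ this is easily below the estimate $b(\Al_n)\geq (n!/2)^{1/3}$ from \cite[Theorem 12]{Cossey-Halasi-Maroti-Nguyen}, and the case $n=6$ is settled directly from the character table ($b(\Al_6)=10$ while the maximal abelian subgroup of $\Sy_6$ is $(C_3)^2$ of order $9$). The group $\Al_5$ is genuinely exceptional because $\Sy_5$ contains the abelian subgroup $\langle(12),(345)\rangle\cong C_6$ while $b(\Al_5)=5$.

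For \emph{classical groups other than $\PSL_2(q)$} and for \emph{exceptional groups of Lie type}, I would proceed in two steps. Since any abelian subgroup is in particular nilpotent, Lemma \ref{lemma-nilsub-crosscharacteristic} immediately gives $|A|\leq b(S)$ whenever $A\cap (S\cdot D)$ is not an $\ell$-subgroup of $S\cdot D$. When $A\cap (S\cdot D)$ \emph{is} an $\ell$-subgroup, it is an abelian unipotent subgroup, and the abelian hypothesis affords a strictly better bound than the full Sylow size used in Theorem \ref{theorem-nilpotent-simple}: for type $A_{n-1}$ every abelian unipotent subgroup of $\GL_n(q)$ has order at most $q^{\lfloor n^2/4\rfloor}$, a classical bound going back to Schur, with analogous improvements in the other classical and exceptional types. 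Combined with the outer-automorphism factor $|A(S)|\leq 2f$ (respectively $f$ in the exceptional case) and the Steinberg lower bound for $b(S)$, this yields $|A|\leq b(S)$ for all $S$ outside an explicit finite list that is then settled using \cite{Atlas} or \cite{GAP}. The \emph{sporadic} case is finished by reading off the largest abelian subgroup orders and $b(S)$ from \cite{Atlas}; since $|\Out(S)|\leq 2$, the outer contribution is easily absorbed.

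The main remaining obstacle is $S=\PSL_2(q)$ with $q=\ell^f\geq 7$, where $|A|$ and $b(S)=q+1$ are of the same magnitude, so no slack is available. Here I would stratify $A$ by its intersection with $\PGL_2(q)$: if $A\leq\PGL_2(q)$ then $A$ lies inside a unipotent subgroup, a split torus, or a non-split torus, and hence $|A|\leq q+1=b(S)$ immediately. Otherwise $A$ projects onto a nontrivial cyclic group of field automorphisms of some order $d\mid f$, so $A$ lies in the centralizer of a generator $\tau$, which is contained in $\PGL_2(\ell^{f/d})\rtimes\langle\tau\rangle$, giving $|A|\leq d(\ell^{f/d}+1)$. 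The elementary inequality $d(\ell^{f/d}+1)\leq \ell^f+1$ then needs to be verified for every admissible $(\ell,f,d)$ with $q\geq 7$; the only failure is $q\in\{4,5\}$, i.e.\ $\PSL_2(q)\cong\Al_5$, which is precisely the stated exception.
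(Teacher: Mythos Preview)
Your outline is sound and would yield a correct proof, but the route you take for groups of Lie type of rank at least $2$ differs from the paper's. You invoke the dichotomy of Lemma~\ref{lemma-nilsub-crosscharacteristic} and, in the residual case where $A\cap(S\cdot D)$ is an $\ell$-group, appeal to bounds on \emph{abelian unipotent} subgroups (e.g.\ the Schur bound $q^{\lfloor n^2/4\rfloor}$ in type $A_{n-1}$). The paper instead bypasses Lemma~\ref{lemma-nilsub-crosscharacteristic} entirely here: it simply bounds $|A\cap S|$ by the maximal order of an abelian subgroup of $S$ (via Vdovin's global estimate $|A\cap S|<|S|^{1/3}$ for the exceptional families, or the explicit value $q^{\lfloor n^2/4\rfloor}$ for $\PSL_n(q)$), multiplies by $|\Out(S)|$, and compares with the Steinberg degree. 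Both strategies lead to the same small residual list to be checked by hand; the paper's is somewhat more direct since it needs no semisimple/unipotent split, while yours has the virtue of recycling the machinery already set up for Theorem~\ref{theorem-nilpotent-simple}. For $\PSL_2(q)$ and for the alternating and sporadic groups your argument is essentially the same as the paper's.

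One small correction: for $n=6$ you wrote that $\Aut(\Al_6)=\Sy_6$, but $\Al_6$ has an exceptional outer automorphism and $\Aut(\Al_6)\cong\mathrm{P\Gamma L}_2(9)$ has order $1440$. This does not damage your argument, since $\Al_6\cong\PSL_2(9)$ is covered by your $\PSL_2(q)$ analysis (with $\ell=3$, $f=2$, $d=2$ giving $2(3+1)=8\le 10=b(S)$); just remove the separate $\Al_6$ sentence or redirect it to the $\PSL_2$ case. Also, in your $\PSL_2$ paragraph, note that the failure at $q=5$ is not in the inequality $d(\ell^{f/d}+1)\le \ell^f+1$ (since $f=1$ forces $d=1$) but rather in the identity $b(\PSL_2(5))=5<q+1$; you may want to phrase the exception accordingly.
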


\begin{proof}
It was already shown in the proof of
Theorem~\ref{theorem-nilpotent-simple} that the order of a nilpotent
subgroup of $\Aut(\Al_n)$ with $n\geq 7$ is at most $b(\Al_n)$. This
also holds for sporadic simple groups. Therefore from now on we
assume that $S$ is a simple group of Lie type defined over a field
of $q=\ell^f$ elements.

First suppose that $S=E_6(q)$ or $\ta E_6(q)$. Then $\Out(S)\leq
6f$. By \cite[Theorem A]{Vdovin2}, it follows that \[|A|<
|S|^{1/3}|\Out(S)|\leq 6f|S|^{1/3}|<6fq^{26}.\] The theorem then
follows as $b(S)\geq \St_S(1)= q^{36}$, where $\St_S$ denotes the
Steinberg character of $S$. For other exceptional groups, the
arguments are similar with the note that $|\Out(S)|\leq 2f$.

Since the arguments for different families of classical groups are
similar, we provide the proof for only the linear groups.

First suppose that $S=\PSL_3(q)$. The maximal order of an abelian
subgroup of $S$ is $q^2+q+1$ if $3\nmid q-1$ and $q^2$ if $3\mid
(q-1)$. Since $|\Out(S)|=2f(3,q-1)$, it follows that $|A|<6fq^2$.
The theorem then follows unless $q=2,3,5,9$. For these exceptional
cases, one can check the inequality directly using \cite{GAP}.

Now suppose that $S=\PSL_n(q)$ for $n\geq 4$. The maximal order of
an abelian subgroup of $S$ is $q^{\lfloor n^2/4\rfloor}$. Therefore
\[|A|\leq q^{\lfloor n^2/4\rfloor} 2f \gcd(n,q-1).\]
It is now easy to check that $q^{\lfloor n^2/4\rfloor} 2f
\gcd(n,q-1)\leq q^{n(n-1)/2}$, and the theorem is good in this case.

Lastly we suppose that $S=\PSL_2(q)$ with $q>5$. Recall that
$\PGL_2(q)=\PSL_2(q)$ if $q$ is even and $\PGL_2(q)=\PSL_2(q)\cdot
2$ if $q$ is odd. Moreover $\Out(\PGL_2(q))$ is a cyclic group of
order $f$ of field automorphisms of $S$. Assume that $A\cong B\times
C$ where $B$ is an abelian subgroup of $\PGL_2(q)$ and $C$ is a
cyclic subgroup of $\Out(\PGL_2(q))$ of order $f_1$, which is a
divisor of $f$. Then every matrix in $B$ is fixed by the field
automorphism $x\mapsto x^{\ell^{f/f_1}}$, which implies that $B\leq
\PGL_2(\ell^{f/f_1})$ and hence $|B|\leq \ell^{f/f_1}+1$. Therefore
\[|A|\leq (\ell^{f/f_1}+1)f_1\leq q+1=b(S),\] as desired.
\end{proof}

We are now ready to prove Theorem~\ref{thm2}. It is not surprise
that the proof follows the same ideas as in the proof of
Theorem~\ref{thm1}.

\begin{theorem}\label{abeliancase}
Let $H$ be an abelian subgroup of a finite group $G$ and let
$\pi:=\pi(|H|)$. Then $|H \bO_{\pi}(G)/ \bO_{\pi}(G)| \leq b(G)$.
\end{theorem}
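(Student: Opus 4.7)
The plan is to mirror the inductive structure in the proof of Theorem~\ref{thm1-repeat}, replacing the tools for nilpotent subgroups (Lemma~\ref{coprimeaction}, Lemma~\ref{lem1}, Theorem~\ref{theorem-nilpotent-simple}) by their abelian analogues (Lemma~\ref{lem3}, Lemma~\ref{lem4}, and Theorem~\ref{abeliancase}). First I would perform the same reductions as in the opening of that proof: if $\bO_{\pi}(G)>1$ pass to $G/\bO_{\pi}(G)$ and induct, and if $\Phi(G)>1$ apply the Frattini-argument trick to a minimal $G$-invariant $N\leq\Phi(G)$ (necessarily a $\pi'$-group) to arrange $\bO_{\pi}(G/N)=1$ and induct again. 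Thus I may assume $\bO_{\pi}(G)=1=\Phi(G)$.

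Suppose every minimal normal subgroup of $G$ is solvable. Then as in Theorem~\ref{thm1-repeat}, $G=\bF\rtimes A$ with $\bF=\bF(G)$ an abelian $\pi'$-group and $\bC_G(\bF)=\bF$; reducing to $G=H\bF$ via induction on $K=H\bF$, I obtain an abelian $\pi$-group $H$ acting faithfully and coprimely on the abelian group $\bF$. At this point I would invoke an abelian regular-orbit theorem --- essentially Gluck's theorem in the form of Lemma~\ref{lem3}, applied prime-by-prime to the primary components of $\bF$ --- to produce $\lambda\in\Irr(\bF)$ with $\bC_H(\lambda)=1$. Then $\lambda^G\in\Irr(G)$ has degree $[G:\bF]=|H|$, giving $b(G)\geq|H|$.

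Otherwise $G$ has a nonsolvable minimal normal subgroup $V=V_1\times\cdots\times V_k$; set $K=H(V\times\bC_G(V))$ and carry out the same reduction to $G=K$ as in Theorem~\ref{thm1-repeat} (the verification that $\bO_{\pi}(K)=1$ is unchanged). By induction applied to $\bC_G(V)$ there exists $\psi\in\Irr(\bC_G(V))$ with $\psi(1)\geq|H\cap\bC_G(V)|$. Pick $\chi_i\in\Irr(V_i)$ with $\chi_i(1)=b(V_i)$ and set $\chi=\chi_1\times\cdots\times\chi_k\in\Irr(V)$. Since $G/\bC_G(V)$ embeds in $\Aut(V)=\Aut(V_1)\wr\Sy_k$, combining Lemma~\ref{lem4} with Theorem~\ref{abeliancase} yields
\[|H\bC_G(V)/\bC_G(V)|\leq a^k\leq b(V_1)^k=\chi(1),\]
where $a$ denotes the maximum order of an abelian subgroup of $\Aut(V_1)$, provided $V_1\not\cong\Al_5$. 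Any irreducible constituent of $(\chi\times\psi)^G$ has degree at least $\chi(1)\psi(1)\geq|H\bC_G(V)/\bC_G(V)|\cdot|H\cap\bC_G(V)|=|H|$, so $b(G)\geq|H|$, as wanted.

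The main obstacle is the exceptional case $V_1\cong\Al_5$ excluded in Theorem~\ref{abeliancase}: $\Aut(\Al_5)=\Sy_5$ has abelian subgroups of order $6>5=b(\Al_5)$, so the estimate above loses a factor of $(6/5)^k$ and the naive degree bound no longer suffices. Here I would adapt the $\PSL_2(8)$ trick from the end of the proof of Theorem~\ref{theorem-nilpotent-simple}: the Atlas provides $\mu,\nu\in\Irr(\Al_5)$ whose $\Sy_5$-inertia group equals $\Al_5$, so for each $i$ one has $\bI_{\bN_G(V_i)}(\mu_i)=\bI_{\bN_G(V_i)}(\nu_i)=\bC_G(V_i)\times V_i$. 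Applying Lemma~\ref{lem3} to the action of the abelian group $H\bC_G(V)/\bC_G(V)$ on the coordinates of $V$ --- that is, on the $\GF(2)$-module of binary choices between $\mu_i$ and $\nu_i$ --- should yield $\chi=\prod\chi_i\in\Irr(V)$ with $\chi_i\in\{\mu_i,\nu_i\}$ and $\bI_G(\chi)=V\times\bC_G(V)$. Then $(\chi\times\psi)^G$ is irreducible of degree $[G:V\times\bC_G(V)]\,\chi(1)\,\psi(1)$, and a short arithmetic comparison in the spirit of the $A_1(2^3)$ computation in Theorem~\ref{thm1-repeat} should absorb the missing factor and finish the proof.
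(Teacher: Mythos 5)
Your proposal follows the paper's own proof essentially step for step: the same reductions to $\bO_{\pi}(G)=\Phi(G)=1$, the same regular-orbit argument when all minimal normal subgroups are solvable, the same combination of Lemma~\ref{lem4} with the almost-simple abelian bound in the generic nonsolvable case, and the same resolution of the $\Al_5$ exception via the two degree-$3$ characters together with Lemma~\ref{lem3}. The one step you defer --- the final arithmetic in the $\Al_5$ case --- is in the paper just the chain $|H|\le |G:(V\times\bC_G(V))|\,|H\cap V|\,|H\cap \bC_G(V)|\le |G:(V\times\bC_G(V))|\,\chi(1)\,\psi(1)\le b(G)$, which is even more direct than the $A_1(2^3)$ computation you cite, since here $\mu_i(1)=\nu_i(1)=3$ and there is no mismatched factor to absorb.
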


\begin{proof}
The theorem is obvious when $H=1$. So we assume that $H$ is
nontrivial. Using induction, we may assume that
$\bO_{\pi}(G)=1$. Also, by using the same arguments as in the proof of Theorem ~\ref{thm1-repeat}, we may assume that $\Phi(G)=1$.

%



We first assume that all minimal normal subgroups of $G$ are solvable. As
before, let $\bF$ be the Fitting subgroup of $G$. Since
$\Phi(G)=\bO_{\pi}(G)=1$, $G = \bF \rtimes A$ is a semidirect
product of an abelian $\pi'$-group $\bF$ and a group $A$ which is
isomorphic to $G/\bF$. Clearly, $\bC_G(\bF) = \bC_A(\bF) \times \bF$
and $\bC_A(\bF) \vartriangleleft G$. Since $\bF$ contains all the
minimal normal subgroups of $G$, we conclude that $\bC_A(\bF) = 1$,
and hence, $\bC_G(\bF) = \bF$.

Let us investigate the subgroup $K = H\bF$. Since $\bO_{\pi}(K)$
centralizes the $\pi'$-group $\bF$ and hence $\bO_{\pi}(K) \leq
\bC_G(\bF)=\bF$, it follows that $\bO_{\pi}(K)=1$. Assume that $K<
G$. Note that $H>1$ by our assumption  and $H\bF$ is nonabelian, the
result follows by induction. Therefore, we may assume that $G= K =
H\bF$. Observe that $G = H\bF$ is solvable and $H$ acts faithfully
on the abelian $p'$-group $\bF$. Since $H$ is abelian, there exists some linear $\lambda \in \Irr(\bF)$
such that $|\bC_H(\lambda)| =1$, and so that $b(G)\geq |H|$.
Therefore, the theorem holds in this case.

We now assume that $G$ has a nonsolvable minimal normal subgroup $V$.
Set $V = V_1 \times \cdots \times V_k$,
where $V_1, \ldots, V_k$ are isomorphic nonabelian simple groups.
Let us investigate the subgroup $K = H(V \times \bC_G(V))$.

Since $V$ is a direct product of nonabelian simple groups,
$\bO_{\pi}(V)=1$. This implies that $V \cap \bO_{\pi}(K)=1$. Since
$V$ and $\bO_{\pi}(K)$ are both normal in $K$, this implies that
$\bO_{\pi}(K)$ centralizes $V$, and so, $\bO_{\pi}(K) \leq
\bC_G(V)$, and hence, $\bO_{\pi} (K) \le \bO_{\pi}(\bC_G(V))$. Since
$\bC_G(V)$ is normal in $G$, we see that $\bO_{\pi} (\bC_G(V)) \le
\bO_{\pi} (G) = 1$. Thus, we conclude that $\bO_{\pi}(K) = 1$.
Therefore we may assume by induction that $K=G$, i.e., $G/(V \times
\bC_G(V ))$ is an abelian $\pi$-group.

Clearly $\bO_{\pi}(\bC_G(V))=1$. If $\bC_G(V)$ is not ableian, then
by induction there exists $\psi \in \Irr(\bC_G(V))$ such that
$\psi(1)\geq |H \cap \bC_G(V)|$. If $\bC_G(V)$ is ableian, then
clearly all $\psi \in \Irr(\bC_G(V))$ has degree $1$ and $|H \cap
\bC_G(V)|=1$. Thus in all cases, we have $\psi(1)\geq |H \cap
\bC_G(V)|$.





Let $\chi_i \in \Irr(V_i)$ such that $\chi_i(1) = b(V_i)$ and set
$\chi = \chi_1 \times \cdots \times \chi_k$. Clearly $\chi \in
\Irr(V)$ and $\chi(1) = \chi_1^k(1)$. Note that $G/(V \times
\bC_G(V)) \leq \Out(V)$, $\Out(V) \cong \Out(V_1) \wr S_k$, and $G
/\bC_G(V) \leq \Aut(V_1) \wr S_k$. By Lemma~\ref{lem4}, we have
\[|H\bC_G(V)/\bC_G(V)| \leq (|\Aut(V_1)|_{abelian})^k.\]
Suppose that $V_1 \not \cong \Al_5$. By Theorem ~\ref{abeliancase},
we have
$$|\Aut(V_1)|_{abelian})^k \leq \chi_1(1)^{k} = \chi(1).$$
Combining the last two inequalities with $\psi(1)\geq |H \cap
\bC_G(V)|$ and note that $\chi\times \psi$ is an irreducible
character of $V\times
\bC_G(V)$, we obtain \begin{align*}b(G)&\geq b(V\times \bC_G(V))\\
&\geq \psi(1)\chi(1)\\
&\geq |H\cap \bC_G(V)||H\bC_G(V)/\bC_G(V)|\\
&=|H|,\end{align*} and we are done.

It remains to consider the case $V_i \cong \Al_5$. Then there exists
$\mu_i, \nu_i \in \Irr(V_i)$ such that $\mu_i(1)=3$, $\nu_i(1)=3$
and $\bI_{N_G(V_i)}(\mu_i) = \bI_{N_G (V_i)}(\nu_i)= \bC_G(V_i)
\times V_i$. Using Lemma ~\ref{lem3} and the fact that $H$ is
abelian, we see that there exists $\chi = \prod_i\chi_i \in \Irr(V)$
such that $\chi_i \in \{\mu_i, \nu_i\}$ and $\bI_G(\chi) = V\times
\bC_G(V)$. Now $\chi \times \psi \in \Irr(V\times \bC_G(V))$ induces
an irreducible character of $G$ where $\bI_G(\chi \times \psi) =
V\times \bC_G(V)$, and it follows that \begin{align*}|H| &\leq |G :
(V \times \bC_G(V))| |H \cap V| |H \cap \bC_G(V)|\\
& \leq |G : (V \times \bC_G(V))| \chi(1) \psi(1)\\
&\leq b(G).\end{align*} The proof is complete.
\end{proof}

\begin{remark} In ~\cite{Vdovin2,Vdovin1}, Vdovin studied the size and in
some cases structure of a maximal abelian/nilpotent subgroup of a
finite simple group. However his bounds are not sufficient for the
purpose of this paper where we need to bound the size of an
abelian/nilpotent subgroup of an almost simple group. We believe the
results on the size of abelian and nilpotent subgroups of almost
simple groups will be useful in other applications.
\end{remark}

\section{Acknowledgement} \label{sec:Acknowledgement}
This work was initiated when the first author visited the Department
of Mathematics at Texas State University during Fall 2017. He would
like to thank the department for its hospitality. This work was
partially supported by a grant from the Simons Foundation (No
499532, to YY).


\end{document}